\theoremstyle{plain}
  \newtheorem{proposition}[]{Proposition}
  \newtheorem{lemma}[]{Lemma}
  \newtheorem{theorem}[]{Theorem}
  \newtheorem{corollary}[]{Corollary}
  \newtheorem{remark}[]{Remark}
\theoremstyle{definition}
    \newtheorem{definition}{Definition}
\title{Exploration processes and SLE$_6$}
\author{Jianping Jiang }
\address{NYU-ECNU Institute of Mathematical Sciences at NYU Shanghai, 3663 Zhongshan
Road North, Shanghai 200062, China. }
\email{jjiang@nyu.edu}
\keywords{Exploration process, Schramm-Loewner evolution, scaling limit, harmonic measure, reversibility}
\subjclass[2010]{Primary 60K35, 60J67; secondary 82B43}
\begin{document}

\begin{abstract}
We define radial exploration processes from $a$ to $b$ and from $b$ to $a$ in a domain $D$ of hexagons where $a$ is a boundary point and $b$ is an interior point. We prove the reversibility: the time-reversal of the process from $b$ to $a$ has the same distribution as the process from $a$ to $b$. We show the scaling limit of such an exploration process is a radial SLE$_6$ in $D$. As a consequence, the distribution of the last hitting point with the boundary of any radial SLE$_6$ is harmonic measure. We also prove the scaling limit of a similar exploration process defined in the full complex plane $\mathbb{C}$ is a full-plane SLE$_6$. A by-product of these results is that the time-reversal of a radial SLE$_6$ trace after the last visit to the boundary is a full-plane SLE$_6$ trace up to the first visit of the boundary.

\end{abstract}

\maketitle

\section{Introduction}
The chordal exploration process for percolation was introduced by Schramm in a seminal paper \cite{Sch00}. In that paper, Schramm shows that if the scaling limit of the chordal exploration process exists and is conformally invariant, then it must be a chordal SLE$_\kappa$. The value $\kappa=6$ can be determined by either the locality property or the crossing probabilities, since SLE$_6$ is the only SLE$_\kappa$ that satisfies the locality property \cite{LSW01} and Cardy's formula \cite{Car92}.

Shortly after \cite{Sch00}, Smirnov \cite{Smi01} proved Cardy's formula for the critical site percolation on the triangular lattice. He also outlined a strategy for using the conformal invariance of crossing probabilities to prove the convergence of the chordal exploration process to a chordal SLE$_6$. Later, Camia and Newman \cite{CN07} presented a detailed and self-contained proof of this convergence based on Smirnov's strategy. Smirnov also outlined a different strategy in \cite{Smi07}. See Werner \cite{Wer07} for a detailed proof of this new strategy.

In section 4.3 of \cite{Wer07}, Werner defined a radial exploration process for percolation on a hexagonal lattice by concatenating a family of chordal exploration paths in a family of decreasing domains. Then he sketched a proof of the convergence of this radial exploration process to radial SLE$_6$. In this paper, we will define a different version of the radial exploration process, and then we will give a detailed proof of the same convergence.

In \cite{She09}, Sheffield defined an exploration path between a boundary point and another point on a hexagonal lattice domain. In \cite{Ken14}, Kennedy defined a smart kinetic self-avoiding walk (SKSAW) between two arbitrary points in any lattice domain. Similar definitions have appeared in the physics literature since the mid 1980's, see \cite{WT85} and \cite{KL85}. By considering several simple examples, one can see that none of these existing definitions of radial exploration processes satisfies the reversibility property. Our definition of radial exploration process is very similar to the exploration path in \cite{She09} and SKSAW in \cite{Ken14}, with a modification in order to get the reversibility property, which is the key to our proof of the properties of SLE$_6$ (see Corollary \ref{cor} and Theorem \ref{thm3} below).

Let $D$ be a simply connected domain in the complex plane with $a\in\partial D$ and $b\in D$, and let $D_{\delta}$ be the largest connected component of hexagons of $D\cap \mathcal{H}_{\delta}$ where $\mathcal{H}_{\delta}$ is the hexagonal lattice with mesh $\delta$. \textit{Mid-edges} of $\mathcal{H}_{\delta}$ are centers of edges of $\mathcal{H}_{\delta}$. In figure \ref{fig1}, one mid-edge is labeled by a small black square while one vertex of $\mathcal{H}_{\delta}$ is labeled by a circle. The idea of using mid-edges instead of vertices is motivated by \cite{DCS12}. This idea is the main difference between our definition and the definitions of radial exploration processes in \cite{Wer07}, \cite{She09} and \cite{Ken14}. Our definition is necessary to obtain the reversibility property. Let $a_{\delta}$ be a closest mid-edge to $a$ in the set of mid-edges outside of $D_{\delta}$ but within $\delta/2$ distance from the topological boundary of $D_{\delta}$, $\partial D_{\delta}$, and let $b_{\delta}$ be a closest mid-edge to $b$ in $D_{\delta}$. In case such closest mid-edges are not unique, one may choose an arbitrary one. 
\begin{definition}
Let $\gamma_{\delta}(0)=a_{\delta}$. In the first step, there are two mid-edges in $D_{\delta}$ within distance $\delta$ from $a_{\delta}$, each of them is picked with probability $1/2$ independently. We denote the picked mid-edge by $\gamma_{\delta}(1)$. In the $k$-th step ($k\geq 2$), there are at most two mid-edges (call them \textit{allowable}) that are within distance $\delta$ from $\gamma_{\delta}(k-1)$ and connected to $b_{\delta}$ in $D_{\delta}\setminus\gamma_{\delta}[0,k-1]$ (i.e., there exists a polygonal path from those mid-edges to $b_{\delta}$, contained in $D_{\delta}$, that does not cross $\gamma_{\delta}[0,k-1]$). We view $\gamma_{\delta}[0,k-1]$ as a continuous polygonal path (i.e., a continuous path using only edges of $\mathcal{H}_{\delta}$), and we also require that $\gamma_{\delta}$ evaluated at half-integers are the vertices of $\mathcal{H}_{\delta}$. A simple induction argument shows that there is always at least one such allowable mid-edge. We pick each of the allowable mid-edges with probability $1/2$ independently of all previous choices if there are two; we pick the allowable mid-edge if there is only one.  Denote the new picked mid-edge by $\gamma_{\delta}(k)$. We stop the process when $\gamma_{\delta}$ reaches $b_{\delta}$. The resulting polygonal path $\gamma_{\delta}$ is called the \textit{radial exploration process from $a_{\delta}$ to $b_{\delta}$ in $D_{\delta}$}.
\end{definition}
\begin{figure}
\centering
\includegraphics[height=2.5in,width=3.2in]{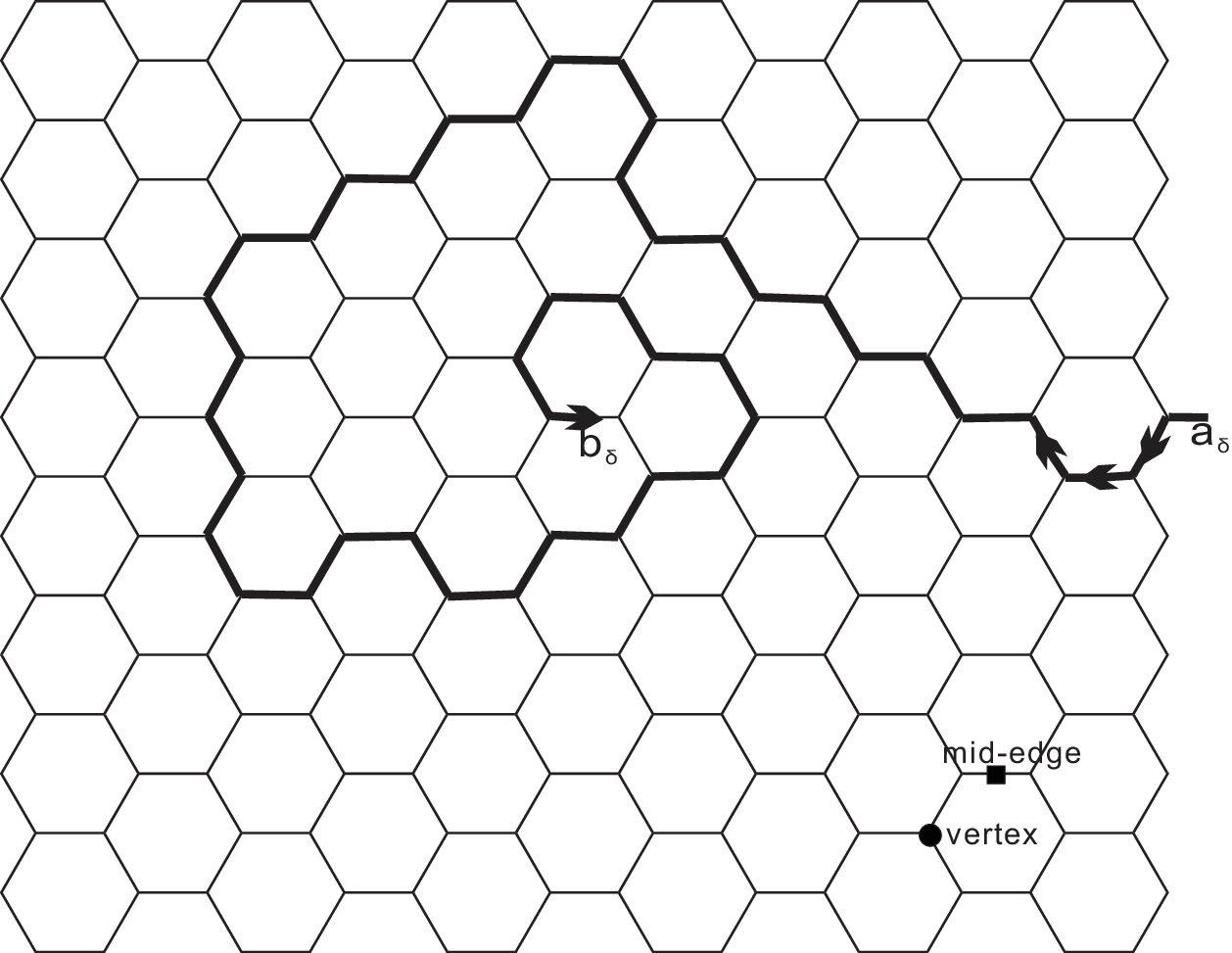}
\caption{The heavy path is a realization of the radial exploration process $\gamma_{\delta}$  from $a_{\delta}$ to $b_{\delta}$ in $D_{\delta}$. Here $D_{\delta}$ consists of all hexagons shown. $\gamma_{\delta}(1)$, $\gamma_{\delta}(2)$ and $\gamma_{\delta}(3)$ are labeled by arrows. }\label{fig1}
\end{figure}

We will see in Lemma \ref{lem3.1} that each walk $\gamma_{\delta}$ has weight $(1/2)^{l(\gamma_{\delta})}$ where $l(\gamma_{\delta})$ is the number of hexagons in $D_{\delta}$ sharing at least a half-edge with $\gamma_{\delta}$. This weight formula, $(1/2)^{l(\gamma_{\delta})}$, is the same as the weight formula for the chordal exploration process. We think this is one of the advantages of our definition of the radial exploration process. Another advantage is the reversibility that we will describe below.

We next define a radial exploration process (say, $\gamma_{\delta}^{\prime}$) from $b_{\delta}$ to $a_{\delta}$ in $D_{\delta}$. 
\begin{definition}
Let $\gamma_{\delta}^{\prime}(0)=b_{\delta}$. In the first step, there are exactly 4 mid-edges in $D_{\delta}$ (if $\delta$ is small enough) within distance $\delta$ from $b_{\delta}$, and each of them is picked with probability $1/4$ independently. We denote the picked mid-edge by $\gamma_{\delta}^{\prime}(1)$. Recall that once $a_{\delta}$ is fixed, $\gamma_{\delta}[0,1/2]$ is the unique half-edge starting at $a_{\delta}$ and connected to $D_{\delta}$.  In the $k$-th step ($k\geq 2$), there are at most two mid-edges (call them \textit{allowable}) that are within distance $\delta$ from $\gamma_{\delta}^{\prime}(k-1)$ and connected to $a_{\delta}$ in $D_{\delta}\cup\gamma_{\delta}[0,1/2]\setminus\gamma_{\delta}^{\prime}[0,k-1]$ (i.e., there exists a polygonal path from those mid-edges to $a_{\delta}$, contained in $D_{\delta}\cup\gamma_{\delta}[0,1/2]$, that does not cross $\gamma_{\delta}^{\prime}[0,k-1]$). We view $\gamma_{\delta}^{\prime}[0,k-1]$ as a continuous polygonal path, and we also require that $\gamma_{\delta}^{\prime}$ evaluated at half-integers are the vertices of $\mathcal{H}_{\delta}$. We pick each of the allowable mid-edges with probability $1/2$ independently if there are two; we pick the allowable mid-edge if there is only one. Denote the new picked mid-edge by $\gamma_{\delta}^{\prime}(k)$. We stop the process when $\gamma_{\delta}^{\prime}$ reaches $a_{\delta}$.  The resulting polygonal path $\gamma_{\delta}^{\prime}$ is called the \textit{radial exploration process from $b_{\delta}$ to $a_{\delta}$ in $D_{\delta}$}.
\end{definition}

Our first result is about the reversibility of radial exploration processes:
\begin{lemma}\label{lem1}
For any simply connected domain D, the radial exploration process from $a_{\delta}$ to $b_{\delta}$ in $D_{\delta}$ has the same distribution as the time-reversal of the radial exploration process from $b_{\delta}$ to $a_{\delta}$ in $D_{\delta}$.
\end{lemma}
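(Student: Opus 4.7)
The plan is to show that both exploration processes assign the same weight to each polygonal path, using the formula $(1/2)^{l(\gamma_\delta)}$ advertised in the introduction. Since $l(\gamma_\delta)$ counts hexagons adjacent to $\gamma_\delta$, it is manifestly invariant under reversal of the path, so reversibility reduces to establishing this weight formula for both processes separately.

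Step 1 (forward process). I would fix a lattice path $\gamma$ from $a_\delta$ to $b_\delta$ and write $\mathbb{P}(\gamma_\delta=\gamma)$ as a product of per-step factors. By construction, at every visited vertex $v\in\mathcal{H}_\delta$ there are either one or two allowable outgoing mid-edges, contributing a factor $1$ or $1/2$ respectively. So $\mathbb{P}(\gamma_\delta=\gamma)=(1/2)^{k(\gamma)}$ where $k(\gamma)$ counts the vertices at which a genuine binary choice occurred.

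Step 2 (the combinatorial identity $k(\gamma)=l(\gamma)$). This is where I expect the main technical work. At each vertex $v$ visited by the path, the choice between the two outgoing mid-edges corresponds to which side of $v$ the path decides to place the hexagon $H_v$ that it is about to ``wrap around''. I would argue case-by-case on the local configuration at $v$ that both mid-edges are allowable if and only if $H_v$ has not already been forced: (i) $H_v$ is not already adjacent to an earlier segment of $\gamma[0,k-1]$; (ii) $H_v$ is a hexagon of $D_\delta$ (otherwise $\partial D_\delta$ forces the turn); and (iii) neither choice disconnects the path from $b_\delta$ (the SKSAW constraint coincides with forcing by an already-uncovered hexagon on the opposite side). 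Each hexagon in $D_\delta$ adjacent to $\gamma$ gets ``uncovered'' at exactly one vertex along the walk, giving $k(\gamma)=l(\gamma)$. Handling the first step out of $a_\delta$ and the stopping at $b_\delta$ requires a separate verification of boundary and endpoint cases, which is the delicate part of the argument.

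Step 3 (reverse process). Using the formal definition from Section~\ref{secrev}, the identical reasoning applies to a path $\tilde\gamma$ from $b_\delta$ to $a_\delta$, giving $\mathbb{P}(\tilde\gamma_\delta=\tilde\gamma)=(1/2)^{l(\tilde\gamma)}$. The only thing to verify here is that the per-step ``allowability'' rules in the reverse process (self-avoidance, connectivity to $a_\delta$) also match hexagon coverage in the same way; this is essentially a mirror of Step~2.

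Step 4 (conclusion). Given $\gamma$ from $a_\delta$ to $b_\delta$, let $\gamma^R$ denote its time-reversal. The set of hexagons of $D_\delta$ sharing a half-edge with $\gamma$ equals the corresponding set for $\gamma^R$, so $l(\gamma)=l(\gamma^R)$. Combining the two weight formulas yields $\mathbb{P}(\gamma_\delta=\gamma)=\mathbb{P}(\tilde\gamma_\delta=\gamma^R)$, which is the desired distributional equality.
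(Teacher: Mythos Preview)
Your overall strategy is exactly the paper's: prove that both processes assign weight $(1/2)^{l(\omega)}$ to every simple polygonal path $\omega$ (this is Lemma~\ref{lem3.1}), then invoke the reversal-invariance of $l$. The paper makes your Step~2 identification transparent by recasting the exploration as a coloring algorithm in which a binary choice occurs precisely when a new hexagon of $D_\delta$ is colored.

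The gap is in Step~3. The reverse process $\gamma'_\delta$ is \emph{not} a mirror of the forward one, and ``identical reasoning'' does not apply. Two asymmetries arise. First, the initial step out of the interior point $b_\delta$ has four allowable mid-edges, contributing a factor $1/4$ rather than $1/2$. Second, when $\gamma'_\delta$ first reaches $\partial D_\delta$ there are two allowable mid-edges (a genuine $1/2$ factor), but the hexagon $\xi$ that would be uncovered at that step lies outside $D_\delta$ and hence does not contribute to the count. If one tracks the hexagons actually uncovered by the reverse walk, the count is $l'(\omega)=l(\omega)-3$: among the four hexagons adjacent to $b_\delta$, exactly three are never uncovered by the reverse algorithm (the first step commits to a direction before any coloring, and two more are bypassed because only one mid-edge is allowable when the path revisits that neighborhood). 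The paper's actual computation is that these discrepancies cancel,
\[
\frac{1}{4}\cdot\frac{1}{2}\cdot\left(\frac{1}{2}\right)^{l'(\omega)}=\left(\frac{1}{2}\right)^{l'(\omega)+3}=\left(\frac{1}{2}\right)^{l(\omega)},
\]
which is the nontrivial content of Lemma~\ref{lem3.1}. Your proposal needs this bookkeeping; without it the claimed weight formula for the reverse process is unsupported.
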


Next, we will adopt the strategy outlined by Werner \cite{Wer07} and the techniques developed by Camia and Newman (\cite{CN06} and \cite{CN07}) to prove
\begin{theorem}\label{thm1}
Suppose $D$ is a Jordan domain. As $\delta\downarrow 0$, the radial exploration process in $D_{\delta}$ from $a_{\delta}$ to $b_{\delta}$ converges weakly to the radial SLE$_6$ in $D$ from $a$ to $b$.
\end{theorem}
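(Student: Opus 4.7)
The plan is to follow the two-step strategy of Camia--Newman: first establish tightness of $\{\gamma_\delta\}_{\delta>0}$ in a suitable space of curves, then identify any subsequential limit as radial SLE$_6$ by using the locality of percolation interfaces to reduce to the chordal convergence already proved in \cite{CN07}.

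For tightness, the natural setting is the space of continuous curves from $a$ to $b$ in $\bar D$ modulo monotone reparametrization, equipped with the uniform metric, and the criterion to verify is that of Aizenman--Burchard: a uniform power-law bound on the probability that the curve makes $k$ essentially disjoint crossings of a fixed annulus. Although $\gamma_\delta$ is not literally a percolation interface, each walk carries weight $(1/2)^{l(\gamma_\delta)}$ exactly as a chordal exploration would, so it can be coupled step-by-step with a critical site-percolation configuration on the hexagons it visits. This allows one to import the Russo--Seymour--Welsh estimates and the annulus-crossing bounds of Camia--Newman \cite{CN06} and conclude tightness, with limiting curves supported on H\"older-continuous paths.

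For identification, fix $\varepsilon>0$ small and let $\tau_\varepsilon^\delta$ be the first time $\gamma_\delta$ enters $B(b,\varepsilon)$. The key locality statement is: up to $\tau_\varepsilon^\delta$, the law of $\gamma_\delta$ coincides with that of the chordal exploration process in the slit domain $D_\delta\setminus B(b,\varepsilon)$ from $a_\delta$ to an appropriately chosen target arc on $\partial B(b,\varepsilon)$. Indeed, both processes decide each step by looking only at the two local neighbors and checking connectivity to the target side, and outside $B(b,\varepsilon)$ the condition ``connected to $b_\delta$ in $D_\delta$'' is equivalent to ``connected to $\partial B(b,\varepsilon)\cap D_\delta$''. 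Combining this with the Camia--Newman convergence of the chordal exploration to chordal SLE$_6$ and the locality property of chordal SLE$_6$ itself, any subsequential limit $\gamma$, stopped at $\tau_\varepsilon=\inf\{t:\gamma(t)\in\overline{B(b,\varepsilon)}\}$, has the law of radial SLE$_6$ in $D$ from $a$ to $b$ stopped at the same hitting time.

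The last step is to let $\varepsilon\downarrow 0$: the previous step already determines the law of $\gamma$ on the complement of every fixed neighborhood of $b$, so what remains is a uniform near-target estimate showing that the time and diameter of the portion of $\gamma_\delta$ between $\tau_\varepsilon^\delta$ and $\tau_{\varepsilon/2}^\delta$ tend to zero in probability as $\varepsilon\downarrow 0$, uniformly in $\delta$. This near-target control is the main obstacle, since radial SLE$_6$ is not simple and may make many macroscopic excursions before being absorbed at $b$; the estimate has to be extracted from annulus-crossing bounds for $\gamma_\delta$ combined with the smallness of the harmonic measure from $b$ of subarcs of $\partial B(b,\varepsilon)$. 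Once this near-target analysis is in place, the identification of the limit with radial SLE$_6$ follows.
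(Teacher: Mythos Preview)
Your outline is in the right spirit---tightness, then identification via locality and the chordal convergence of Camia--Newman---but the identification step as you have written it has a genuine gap. You claim that up to $\tau_\varepsilon^\delta$ the radial exploration coincides with a chordal exploration in $D_\delta\setminus B(b,\varepsilon)$ aimed at ``an appropriately chosen target arc''. The problem is that $D\setminus B(b,\varepsilon)$ is doubly connected, and a process that checks connectivity to the entire inner boundary component is precisely a radial-type exploration, not a chordal one; there is no single boundary point $c$ such that ``connected to $c$'' agrees with ``connected to $b_\delta$'' all the way up to $\tau_\varepsilon^\delta$ (the process may wind around $B(b,\varepsilon)$ and disconnect any fixed $c$ from $b_\delta$ long before it enters the ball). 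Consequently you cannot invoke the Camia--Newman chordal convergence in one shot on this annular domain.

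The paper closes exactly this gap by an iterative decomposition (following Werner): one chooses a sequence of boundary target points $z_k\in\partial D_k$ and stopping times $\tau_k$ at which $z_k$ is disconnected from $b$, so that each piece $\gamma[\tau_{k-1},\tau_k]$ lives in the \emph{simply connected} domain $D_k$ and, by locality on both the discrete and continuous sides, has the law of a chordal exploration (resp.\ chordal SLE$_6$) from $\gamma(\tau_{k-1})$ to $z_k$ stopped at the disconnection time. This is the setting to which \cite{CN07} applies directly. The near-target control you correctly flag as the main obstacle is then obtained not from harmonic-measure smallness but from an RSW argument (the paper's Lemma~\ref{lem4.2}) showing that $\max\{d_x(D_k^\delta),d_y(D_k^\delta)\}$ shrinks by a fixed factor with uniformly positive probability every three steps, which forces $\operatorname{diam}(D_k^\delta)<\varepsilon$ after a tight number of iterations.
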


\begin{remark}
Here, the distance between two continuous curves is the uniform metric on equivalence classes of curves modulo monotonic reparametrization, see \eqref{eq2.1} for the definition.
\end{remark}

As a result of Lemma \ref{lem1} and Theorem \ref{thm1}, we will prove
\begin{corollary}\label{cor}
Suppose $D$ is a Jordan domain that contains $0$. The distribution of the last hitting point of $\partial D$ of a radial SLE$_6$ in $D$ (aiming at $0$)  is the harmonic measure in $D$ started at $0$ regardless of the starting point of the radial SLE$_6$.
\end{corollary}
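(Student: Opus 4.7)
The plan is to combine the discrete reversibility of Lemma~\ref{lem1} with the scaling limit of Theorem~\ref{thm1} to reduce the ``last hit'' of the forward exploration to the ``first hit'' of the reversed exploration, and then to identify the limiting distribution using rotational symmetry in the disk together with the conformal invariance of radial SLE$_6$. Concretely, let $L_\delta$ denote the last mid-edge on $\partial D_\delta$ visited by $\gamma_\delta$. Lemma~\ref{lem1} identifies the law of $L_\delta$ with that of $F_\delta$, the first mid-edge on $\partial D_\delta$ visited by the reversed exploration $\tilde\gamma_\delta$ from $b_\delta$ to $a_\delta$. As long as $\tilde\gamma_\delta$ lies strictly in the interior of $D_\delta$, the complement $D_\delta\setminus\tilde\gamma_\delta$ is connected, so every candidate mid-edge is connected to $a_\delta$ and is therefore allowable; at the step in which $\tilde\gamma_\delta$ first reaches $\partial D_\delta$, the previous path is still entirely interior, so both candidate neighbors are again allowable. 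Hence the law of $\tilde\gamma_\delta$ stopped at its first boundary hit, and in particular that of $F_\delta$, does not depend on $a_\delta$. Theorem~\ref{thm1} then implies that the distribution of the last hitting point with $\partial D$ of the radial SLE$_6$ in $D$ from $a$ to $0$ is independent of $a\in\partial D$.

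To identify this common distribution with harmonic measure, I would first take $D=\mathbb{D}$. Since radial SLE$_6$ is conformally invariant, rotation by $\theta$ maps radial SLE$_6$ from $e^{-i\theta}a$ to $0$ onto radial SLE$_6$ from $a$ to $0$, so the law $\mu_a$ of the last hitting point satisfies $\mu_a = (R_\theta)_*\mu_{e^{-i\theta}a}$, where $R_\theta$ denotes rotation by $\theta$. Combined with the $a$-independence established above, $\mu:=\mu_a$ must be invariant under every rotation of $\partial\mathbb{D}$, hence uniform, which is exactly the harmonic measure from $0$ in $\mathbb{D}$. For a general Jordan domain $D$ containing $0$, I would transport the statement via a conformal map $\phi:\mathbb{D}\to D$ with $\phi(0)=0$, using the conformal invariance of both radial SLE$_6$ and of harmonic measure.

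The step I expect to be the main obstacle is the passage to the scaling limit for the last-hitting-point functional. Weak convergence of curves in the uniform metric does not immediately yield weak convergence of ``the last time on $\partial D$,'' because one must rule out late, small-scale excursions near the boundary at times close to the final visit. Handling this requires a priori regularity estimates near $\partial D$ for either the discrete exploration or for radial SLE$_6$ at its last boundary visit, and is the point that will require the most care.
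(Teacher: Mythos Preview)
Your proposal follows essentially the same route as the paper: reduce to $\mathbb{D}$ via conformal invariance, use Lemma~\ref{lem1} to trade the last boundary hit of $\gamma_\delta$ for the first boundary hit of $\gamma_\delta'$, observe that the latter does not depend on the target $a_\delta$ (your justification of this via connectivity of the complement of an interior simple arc is exactly the content of the paper's one-line remark), and then combine with rotational invariance to get uniformity on $\partial\mathbb{D}$.

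The obstacle you flag --- continuity of the last-hitting-point functional under the uniform metric --- is precisely the only nontrivial step, and the paper resolves it in the way suggested by the proof of Lemma~\ref{lem4.1}: after a Skorokhod coupling so that $d(\gamma_\delta,\gamma)\to 0$ a.s., one argues that $\limsup_{\delta\downarrow 0}\gamma_\delta(T_\delta)\in\gamma[0,T]$ is immediate, while $\liminf_{\delta\downarrow 0}\gamma_\delta(T_\delta)\in\gamma[0,T)$ would force a boundary $3$-arm event (not all of the same color) at a mesoscopic scale along a subsequence, contradicting Lemma~6.1 of \cite{CN06}. That is the entire missing ingredient in your outline.
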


Analogously, we can define a full-plane exploration process in $\mathcal{H}_{\delta}$ from $0_{\delta}$ to $\infty$ where $0_{\delta}$ is a closest mid-edge to $0$ in $\mathcal{H}_{\delta}$. 
\begin{definition}
Let $\gamma_{\delta}(0)=0_{\delta}$. In the first step, there are exactly 4 mid-edges in $\mathcal{H}_{\delta}$ within distance $\delta$ from $0_{\delta}$, and each of them is picked with probability $1/4$ independently. We denote the picked mid-edge by $\gamma_{\delta}(1)$. In the $k$-th step ($k\geq 2$), there are at most two mid-edges (call them \textit{allowable}) that are within distance $\delta$ from $\gamma_{\delta}(k-1)$ and connected to $\infty$ in $\mathcal{H}_{\delta}\setminus\gamma_{\delta}[0,k-1]$. We pick each of the allowable mid-edges with probability $1/2$ independently if there are two; we pick the allowable mid-edge if there is only one. Denote the new picked mid-edge by $\gamma_{\delta}(k)$. The resulting polygonal path $\gamma_{\delta}[0,\infty)$ is called the \textit{full-plane exploration process from $0_{\delta}$ to $\infty$ in $\mathcal{H}_{\delta}$}.
\end{definition}
Because of its similarity with the radial exploration process, it is natural to conjecture that the scaling limit of this process is a full-plane SLE$_6$. See for example section 6.6 of \cite{Law05a} for the definition of full-plane SLE$_6$. Based on Corollary \ref{cor} and a similar convergence result as Theorem \ref{thm1} for unbounded domains, we will prove
\begin{theorem}\label{thm2}
As $\delta\downarrow 0$, the full-plane exploration process from $0_{\delta}$ to $\infty$ in $\mathcal{H}_{\delta}$ converges weakly to the full-plane SLE$_6$ in $\mathbb{C}$ from $0$ to $\infty$.
\end{theorem}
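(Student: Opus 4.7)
The plan is to realize the full-plane exploration $\gamma^{\mathrm{fp}}_\delta$ as a limit of radial explorations in large disks, and then identify its scaling limit using Theorem \ref{thm1}, Lemma \ref{lem1}, and Corollary \ref{cor}.

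Fix $0<r<R$, let $\Lambda_\rho:=\{z\in\mathbb{C}:|z|<\rho\}$, and sample $a^R$ uniformly on $\partial\Lambda_R$, which by rotational symmetry coincides with harmonic measure from $0$. Let $\eta^R_\delta$ denote the radial exploration process in $(\Lambda_R)_\delta$ from $0_\delta$ to $a^R_\delta$. I would couple $\gamma^{\mathrm{fp}}_\delta$ and $\eta^R_\delta$ step by step using a single sequence of i.i.d.\ fair coin flips, since both processes make a uniform choice among four allowed mid-edges at step $1$ and among at most two at every subsequent step. The key observation is planar-topological: for any finite polygonal path $\gamma$ contained in $\Lambda_r$ starting at $0_\delta$ and any mid-edge $m\in\Lambda_r$, the conditions ``$m$ is connected to $\infty$ in $\mathcal{H}_\delta\setminus\gamma$'' and ``$m$ is connected to $a^R_\delta$ in $(\Lambda_R)_\delta\setminus\gamma$'' are equivalent, because such a $\gamma$ cannot separate a mid-edge inside $\Lambda_r$ from $\partial\Lambda_R$ without simultaneously disconnecting it from $a^R_\delta$. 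Consequently the two coupled processes agree pointwise up to the first exit time $\tau_r$ of $\Lambda_r$, so $\gamma^{\mathrm{fp}}_\delta[0,\tau_r]$ and $\eta^R_\delta[0,\tau_r]$ have the same law.

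By Theorem \ref{thm1} combined with Lemma \ref{lem1} (passed to the limit using continuity of time-reversal on the space of parametrized continuous curves), $\eta^R_\delta$ converges weakly as $\delta\downarrow 0$ to the time-reversal $\eta^R$ of a radial SLE$_6$ in $\Lambda_R$ from $a^R$ to $0$. By Corollary \ref{cor}, $a^R$ is distributed as the last hit of $\partial\Lambda_R$ by this radial SLE$_6$, so $\eta^R$ is exactly the radial SLE$_6$ trace read backwards from its last visit to the boundary. The standard construction of full-plane SLE$_6$ from $0$ to $\infty$ as the $R\to\infty$ limit of such reversed tails (with consistency across $R$ following from Corollary \ref{cor} and the domain Markov property of radial SLE$_6$) identifies $\eta^R[0,\tau_r]$, in the limit $R\to\infty$, with the initial segment of full-plane SLE$_6$ up to its first exit of $\Lambda_r$.

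Combining these steps, $\gamma^{\mathrm{fp}}_\delta[0,\tau_r]$ converges weakly, as $\delta\downarrow 0$ followed by $R\to\infty$, to full-plane SLE$_6$ stopped at its first exit of $\Lambda_r$; taking $r\to\infty$, together with tightness of $\gamma^{\mathrm{fp}}_\delta$ (which should follow from the same a priori percolation arm estimates that underpin Theorem \ref{thm1}), yields convergence of the full trace. The main obstacle I anticipate is the identification step: to pass from ``reversed tail in $\Lambda_R$'' to a single bona fide full-plane SLE$_6$ one must verify that the law of the reversed tail is genuinely consistent across $R$ after suitable stopping, which combines Corollary \ref{cor} with the radial SLE$_6$ domain Markov property in a nontrivial way. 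A secondary technical point is a rigorous version of the planar-topological comparison lemma above, covering the (unlikely but possible) configurations where the path $\gamma$ nearly touches $\partial\Lambda_r$.
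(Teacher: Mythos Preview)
Your coupling of $\gamma^{\mathrm{fp}}_\delta$ with $\eta^R_\delta$ up to the first exit of $\Lambda_r$ is correct, and it does show that $\gamma^{\mathrm{fp}}_\delta[0,\tau_r]$ converges in law, as $\delta\downarrow 0$, to the initial segment of the time-reversal of a radial SLE$_6$ in $\Lambda_R$. But the identification step you flag as ``the main obstacle'' is a genuine gap, and your proposed resolution is circular. There is no ``standard construction of full-plane SLE$_6$ as the $R\to\infty$ limit of such reversed tails'': the standard construction (the paper's Lemma~\ref{lem5.1}) approximates full-plane SLE$_6$ from the \emph{inside out}, as the limit of radial SLE$_6$ in $\mathbb{C}\setminus e^s\overline{\mathbb{D}}$ started uniformly on the small circle and aimed at $\infty$. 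The statement that the reversed tail of radial SLE$_6$ in $\Lambda_R$ is full-plane SLE$_6$ stopped at $\partial\Lambda_R$ is precisely Theorem~\ref{thm3}, which the paper derives \emph{from} Theorem~\ref{thm2}. Corollary~\ref{cor} and the domain Markov property of radial SLE$_6$ give consistency of the \emph{forward} radial laws across nested disks; they do not by themselves identify the time-reversal with anything known.

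The paper circumvents this by running the comparison in the opposite direction. For small $\epsilon>0$ it stops $\gamma^{\mathrm{fp}}_\delta$ at its first exit of $\epsilon\mathbb{D}$ and observes that the continuation $\gamma^{\mathrm{fp}}_\delta[\tau^\epsilon_\delta,\infty)$ is a radial exploration in the \emph{unbounded} complement of the hull, aimed at $\infty$. Two ingredients then close the argument: (i) the discrete hull at time $\tau^\epsilon_\delta$ converges to the Brownian hull $\hat K_{\sigma_{\epsilon\mathbb{D}}}$ (this uses your reversibility Lemma~\ref{lem1} inside $\epsilon\mathbb{D}$, Corollary~\ref{cor}, and the known equality of full-plane SLE$_6$ hulls with Brownian hulls), and (ii) the radial exploration in the unbounded domain converges to radial SLE$_6$ there (Proposition~\ref{prop5.1}). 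Lemma~\ref{lem5.1} then says this radial SLE$_6$, started from a hull of diameter $O(\epsilon)$, is within $O(\epsilon)$ of full-plane SLE$_6$ in Prohorov distance. A triangle-inequality estimate in $\epsilon$ finishes the proof. The point is that the identification with full-plane SLE$_6$ is made on the \emph{outgoing} side, where the Loewner description is available, rather than on the reversed side where it is not.
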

\begin{remark}
The distance between two continuous curves in $\mathbb{C}$ is defined in \eqref{eq2.2}.
\end{remark}

A direct consequence of Lemma \ref{lem1}, Theorem \ref{thm1} and Theorem \ref{thm2} is
\begin{theorem}\label{thm3}
Suppose $D$ is a Jordan domain that contains $0$. Then up to a time-change, the time-reversal of the radial SLE$_6$ in $D$ after the last hitting of $\partial D$ has the same distribution as the full-plane SLE$_6$ started at $0$ and stopped when it first hits $\partial D$.
\end{theorem}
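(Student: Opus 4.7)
The strategy is to build a discrete coupling between the two processes at mesh $\delta$ and then pass to the continuum limit using Theorems \ref{thm1} and \ref{thm2}. Fix $a \in \partial D$, take $b = 0$, and consider the radial exploration $\gamma_\delta^{b\to a}$ from $b_\delta$ to $a_\delta$ in $D_\delta$. Let $\tau_\delta$ denote its first hitting time of $\partial D_\delta$. The crucial combinatorial observation is that on $[0,\tau_\delta]$ the law of $\gamma_\delta^{b\to a}$ coincides with the law of the full-plane exploration $\gamma_\delta^{\mathrm{full}}$ started at $b_\delta$ and stopped at its first hit of $\partial D_\delta$. Indeed, the two step rules differ only in a global connectivity constraint (``connected to $a_\delta$ in $D_\delta$'' versus ``connected to $\infty$ in $\mathcal{H}_\delta$''), and before the path touches $\partial D_\delta$ the unexplored complement still borders $\partial D_\delta$, so a candidate mid-edge is connected to $a_\delta$ through $\partial D_\delta$ if and only if it is connected to $\infty$ in $\mathcal{H}_\delta$. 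The initial uniform $4$-choice at the interior mid-edge $b_\delta$ also matches in both processes.

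Applying Lemma \ref{lem1}, the initial segment of $\gamma_\delta^{b\to a}$ on $[0,\tau_\delta]$ corresponds under time-reversal to the terminal segment of $\gamma_\delta^{a\to b}$ after its last visit to $\partial D_\delta$. Combined with the previous paragraph, this yields the discrete identity: the time-reversed ``after the last hit'' piece of $\gamma_\delta^{a\to b}$ has the same law as $\gamma_\delta^{\mathrm{full}}$ stopped at its first hit of $\partial D_\delta$. I would then pass to the limit $\delta \downarrow 0$: Theorem \ref{thm1} gives convergence of $\gamma_\delta^{a\to b}$ to radial SLE$_6$ in $D$ from $a$ to $b$; Corollary \ref{cor} ensures that the last visit to $\partial D$ is a.s.\ a well-defined time (distributed as harmonic measure) and the terminal arc is non-degenerate; and Theorem \ref{thm2} gives convergence of $\gamma_\delta^{\mathrm{full}}$ to full-plane SLE$_6$ from $0$. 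This identifies the time-reversed terminal arc of radial SLE$_6$ with full-plane SLE$_6$ run until its first hit of $\partial D$, as curves up to reparameterization; the ``time-change'' in the statement absorbs the difference between the two natural parameterizations of the limiting processes.

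The main obstacle is that stopping at $\partial D$ must commute with the weak limit. For the full-plane side this amounts to a.s.\ continuity of the first-hitting-time functional of $\partial D$ under the limiting law, which reduces to ruling out tangential first contact of SLE$_6$ with the Jordan curve $\partial D$; this follows from the locality of SLE$_6$ and the boundary-regularity estimates already used in proving Theorem \ref{thm2}. For the radial side one needs the analogous statement for the last-hitting-time functional, which is handled by Corollary \ref{cor} together with the same locality and boundary-hitting estimates: the last hit must be isolated from the subsequent excursion toward $0$. Once these continuity inputs are in place, the discrete equality of laws from the first paragraph passes to the limit and yields the theorem.
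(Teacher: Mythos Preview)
Your proposal is correct and follows essentially the same route as the paper. The paper's own proof is the single sentence ``This is an immediate consequence of Theorem~\ref{thm2} and its proof,'' and what you have written is precisely an unpacking of that sentence: the discrete identity between the full-plane exploration stopped at $\partial D_\delta$ and the interior-to-boundary radial exploration stopped at $\partial D_\delta$ is exactly what the paper uses in the proof of Theorem~\ref{thm2} (the line ``$\gamma_\delta[0,\tau_\delta^\epsilon]$ has the same distribution as $\beta_\delta^\epsilon[0,S_\delta^\epsilon]$''), the reversibility step is Lemma~\ref{lem1}, and the passage to the limit together with the continuity of the last/first hitting functionals is handled in the paper by Lemma~\ref{lem5.2} and the argument in the proof of Corollary~\ref{cor} (the $3$-arm/$6$-arm estimates you allude to).
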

\begin{remark}
As we mentioned, the reversibility of the radial exploration processes is essential for the proofs of Corollary \ref{cor} and Theorem \ref{thm3}. We do not see a way to prove these results for SLE$_6$ without using the exploration processes defined in this paper.
\end{remark}

The organization of the paper is as follows. In Section \ref{secpre} we define the metrics on curves and review the definition of SLE$_{\kappa}$. In Section \ref{secrev} we prove the reversibility of the radial exploration processes. Section \ref{sec4} is devoted to proving Theorem \ref{thm1}; while Section \ref{sec5} proves Theorems \ref{thm2} and \ref{thm3}.

\section{Preliminaries}\label{secpre}
\subsection{The space of curves}
We will identify the real plane $\mathbb{R}^2$ and the complex plane $\mathbb{C}$ in the usual way. A domain $D$ is a nonempty, connected and open subset of $\mathbb{C}$. A simply connected domain $D$ is said to be a Jordan domain if its boundary $\partial D$ is a Jordan curve (i.e., $
\partial D$ is a homeomorphism of the unit circle).

Let $D$ be a simply connected and bounded domain. Our space of curves in $\bar{D}$ is defined as the set of equivalence classes of continuous functions from $[0,1]$ to $\bar{D}$, modulo monotonic reparametrization. For any two continuous curves $\gamma_1$ and $\gamma_2$ , let $d(\cdot,\cdot)$ be the uniform metric on curves, i.e.,
\begin{equation}\label{eq2.1}
d(\gamma_1,\gamma_2):=\inf\sup_{t\in[0,1]}|\gamma_1(t)-\gamma_2(t)|,
\end{equation}
where the infimum is over all choices of parametrizations of $\gamma_1$ and $\gamma_2$  from the interval $[0,1]$. It is easy to check that $d(\cdot,\cdot)$ is a metric on the equivalent classes of curves. The space of continuous curves in $\bar{D}$ is complete and separable with respect to the metric \eqref{eq2.1}, but it is not necessarily compact, see \cite{AB99}.

Let $\hat{\mathbb{C}}:=\mathbb{C}\cup\{\infty\}$ be the Riemann sphere. For any two points $z_1,z_2\in\hat{\mathbb{C}}$, let $\Delta(\cdot,\cdot)$ be the spherical metric, i.e.,
$$\Delta(z_1,z_2):=\inf_{\gamma}\int_{\gamma}\frac{2|dz|}{1+|z|^2},$$
where $\gamma$ is any piecewise differentiable curve joining $z_1$ and $z_2$ in $\mathbb{C}$. This metric is equivalent to the Euclidean metric in any bounded regions. For any two continuous curves $\gamma_1$ and $\gamma_2$ in $\hat{\mathbb{C}}$, we define the distance between $\gamma_1$ and $\gamma_2$ as
\begin{equation}\label{eq2.2}
D(\gamma_1,\gamma_2):=\inf\sup_{t\in[0,1]}\Delta(\gamma_1(t),\gamma_{2}(t)),
\end{equation}
where the infimum is over all choices of parametrizations of $\gamma_1$ and $\gamma_2$  from the interval $[0,1]$. Here, again curves are regarded as equivalence classes, modulo monotonic reparametrization. The space of continuous curves in $\hat{\mathbb{C}}$, denoted by $\mathcal{K}$, is also complete and separable with respect to the metric \eqref{eq2.2} , but not compact. When we talk about weak convergence of measures on curves, we always mean with respect to the metric \eqref{eq2.1} or \eqref{eq2.2}. Let $\mathcal{B}_{\mathcal{K}}$ be the Borel $\sigma$-algebra on $\mathcal{K}$ induced by the metric \eqref{eq2.2}. Let $\mathcal{M}$ denote the set of probability measures on $\mathcal{K}$. For any $\mu,\nu\in \mathcal{M}$, the Prohorov metric $\rho$ on $\mathcal{M}$ defined by: $\rho(\mu,\nu)$ is the infimum of all $\epsilon>0$ such that for every $V\in\mathcal{B}_{\mathcal{K}}$,
$$\mu(V)\leq\nu(V^{\epsilon})+\epsilon,~~~\nu(V)\leq\mu(V^{\epsilon})+\epsilon,$$
where $V^{\epsilon}=\{\gamma:\inf_{\tilde{\gamma}\in V}D(\gamma,\tilde{\gamma})<\epsilon\}$. One property about $\rho$ we will use in this paper is: $\rho(\mu_n,\mu)\rightarrow 0$ as $n\rightarrow\infty$ if and only if $\mu_n$ converges weakly to $\mu$ as $n\rightarrow\infty$. See page 72 of \cite{Bil99} for a proof.

\subsection{SLE$_{\kappa}$}

\subsubsection{Chordal SLE$_{\kappa}$}
Let $(B_t)_{t\geq0}$ be a standard Brownian motion on $\mathbb{R}$ with $B_0=0$. Let $\kappa\geq 0$ and consider the solution to the chordal Loewner equation for the upper half plane $\mathbb{H}$,
$$\partial_tg_t(z)=\frac{2}{g_t(z)-\sqrt{\kappa}B_t},~~~g_0(z)=z, z\in\overline{\mathbb{H}}.$$
This is well defined as long as $g_t(z)-\sqrt{\kappa}B_t\neq 0$, i.e., for all $t<\tau(z)$, where $\tau(z):=\inf\{t\geq 0:g_t(z)-\sqrt{\kappa}B_t=0\}$. For each $t>0$, $g_t:\mathbb{H}\setminus K_t\rightarrow\mathbb{H}$ is a conformal map, where $K_t:=\{z\in\overline{\mathbb{H}}:\tau(z)\leq t\}$ is a compact subset of $\overline{\mathbb{H}}$ such that $\mathbb{H}\setminus K_t$ is simply connected. It is known (see \cite{RS05}) that $\gamma(t):=g_t^{-1}(\sqrt{\kappa}B_t)$ exists and continuous in $t$, and the curve $\gamma$ is called the \textit{trace} of chordal SLE$_{\kappa}$. It is also proven in the same paper that $\gamma$ is simple if and only if $\kappa\in[0,4]$.

Let $D$ be a simply connected domain and $a,c$ be distinct points on $\partial D$. Let $f:\mathbb{H}\rightarrow D$ be a conformal map with $f(0)=a$ and $f(\infty)=c$. If $\gamma$ is the chordal SLE$_{\kappa}$ trace in $\overline{\mathbb{H}}$, then $f\circ\gamma$ defines the chordal SLE$_{\kappa}$ trace from $a$ to $c$ in $\bar{D}$.

\subsubsection{Radial SLE$_{\kappa}$}
Radial SLE$_{\kappa}$ is defined similarly but using the radial Loewner equation
$$\partial_t g_t(z)=-g_t(z)\frac{g_t(z)+e^{i\sqrt{\kappa}B_t}}{g_t(z)-e^{i\sqrt{\kappa}B_t}},~~~g_0(z)=z, z\in\overline{\mathbb{D}},$$
where $\mathbb{D}:=\{z:|z|<1\}$ is the unit disk. The \textit{trace} $\gamma(t):=g_t^{-1}(\sqrt{\kappa}B_t)$ is now a continuous curve growing from $1$ to $0$ in $\overline{\mathbb{D}}$. See \cite{Law13} for the proof of $\lim_{t\rightarrow\infty}|\gamma(t)|=0$.

Let $D$ be a simply connected domain with $a\in\partial D$ and $b\in D$. Let $f:\mathbb{D}\rightarrow D$ be the conformal map with $f(1)=a$ and $f(0)=b$. If $\gamma$ is the radial SLE$_{\kappa}$ trace in $\overline{\mathbb{D}}$, then $f\circ\gamma$ defines the radial SLE$_{\kappa}$ trace from $a$ to $b$ in $\bar{D}$.

\subsubsection{Full-plane SLE$_{\kappa}$}
Let $(B_t^1)_{t\geq0}$ and $(B_t^2)_{t\geq0}$ be two independent Brownian motions starting at the origin, and $Y$ be uniformly distributed on $[0,2\pi/\sqrt{\kappa}]$ and independent of $B_t^1$ and $B_t^2$. Set $B_t=Y+B_t^1$ if $t\geq 0$, and $B_t=Y+B_{-t}^2$ if $t\leq 0$. The full-plane SLE$_{\kappa}$ (from $0$ to $\infty$) is the family of conformal maps $g_t$ satisfying
\begin{equation}\label{eq2.3}
\partial_t g_t(z)=-g_t(z)\frac{g_t(z)+e^{-iU_t}}{g_t(z)-e^{-iU_t}},
\end{equation}
where $U_t:=\sqrt{\kappa}B_t$ and the initial condition is $\lim_{t\rightarrow-\infty}e^tg_t(z)=z, z\in\mathbb{C}\setminus\{0\}$. Let $\gamma:(-\infty,\infty)\rightarrow\mathbb{C}$ with $\lim_{t\rightarrow-\infty}\gamma(t)=0$ and $\lim_{t\rightarrow\infty}\gamma(t)=\infty$ be the trace of full-plane SLE$_{\kappa}$. Then $g_t$ is the conformal transformation of the unbounded component of $\mathbb{C}\setminus\gamma[-\infty,t]$ onto $\mathbb{C}\setminus\overline{\mathbb{D}}$ with $g_t(z)\sim e^{-t}z$ as $z\rightarrow\infty$. We will see in section {\ref{sec5}}: conditioned on the $\gamma[-\infty,t]$ for any $t\in \mathbb{R}$, $\gamma[t,\infty]$ has the distribution of a radial SLE$_{\kappa}$ trace growing in $\hat{\mathbb{C}}\setminus\gamma[-\infty,t]$.

If $z,w$ are distinct points in $\mathbb{C}$, we can also define full-plane SLE$_{\kappa}$ connecting $z$ and $w$ by using a linear fractional transformation sending $0$ to $z$ and $\infty$ to $w$.

\section{Reversibility of radial exploration processes}\label{secrev}
In the introduction, we defined a radial exploration process $\gamma_{\delta}$ from $a_{\delta}$ to $b_{\delta}$ in $D_{\delta}$ and a radial exploration process $\gamma_{\delta}^{\prime}$ from $b_{\delta}$ to $a_{\delta}$ in $D_{\delta}$. Let us summarize those definitions here. Both $\gamma_{\delta}$ and $\gamma_{\delta}^{\prime}$ are simple polygonal paths (i.e., self-avoiding polygonal paths) defined step by step. At each step, a mid-edge adjacent to the tip of the exploration process is declared as an allowable mid-edge if it does not block the exploration process from reaching its target. The exploration process then chooses uniformly among the allowable mid-edges it has, independently at each step. Except that the first step of $\gamma_{\delta}^{\prime}$ has four allowable mid-edges, the number of allowable mid-edges is always either 1 or 2.

One can also define radial exploration processes using coloring algorithms. That is, the coloring algorithms generate paths with the same distribution as $\gamma_{\delta}$ and $\gamma_{\delta}^{\prime}$ which we defined in the introduction.

We define the coloring algorithm for $\gamma_{\delta}$ first. Let $\gamma_{\delta}(0)=a_{\delta}$, and let $\gamma_{\delta}[0,1/2]$ be the unique half-edge starting at $a_{\delta}$ and connected to $D_{\delta}$. In the first step, let $\xi$ be the hexagon in $D_{\delta}$ has the vertex $\gamma_{\delta}(1/2)$, $\xi$ is colored blue or yellow with probability $1/2$. We choose the left half-edge (with respect to $\gamma_{\delta}[0,1/2]$) if $\xi$ is blue, or right half-edge if $\xi$ is yellow. We denote the endpoint of the chosen half-edge (i.e., the mid-edge) by $\gamma_{\delta}(1)$. At the $k$-th step ($k\geq 2$), let $\xi$ be the hexagon centered at $\gamma_{\delta}(k-1)+3[\gamma_{\delta}(k-1)-\gamma_{\delta}(k-3/2)]$.
\begin{itemize}
\item
If $\xi$ has not been colored and is in $D_{\delta}$, we randomly color it blue or yellow with probability $1/2$, and we choose the left half-edge with respect to $\gamma_{\delta}[k-1,k-1/2]$ (note that $\gamma_{\delta}(k-1/2)$ is the other endpoint of the edge contains $\gamma_{\delta}[k-3/2,k-1]$) if $\xi$ is blue, or right half-edge if $\xi$ is yellow;
\item
    if $\xi$ has been colored or is in the complement of $D_{\delta}$, then we choose the half-edge adjacent to $\gamma_{\delta}[k-1,k-1/2]$ that is connected to $b_{\delta}$ in $D_{\delta}\setminus\gamma_{\delta}[0,k-1/2]$.
\end{itemize}
We denote the endpoint of the chosen half-edge (i.e., the mid-edge) by $\gamma_{\delta}(k)$. The algorithm stops when $\gamma_{\delta}$ reaches $b_{\delta}$. This coloring algorithm generates paths with the same distribution as the radial exploration precess from $a_{\delta}$ to $b_{\delta}$ simply because we color $\xi$ at the $k$-th step if and only if there are two allowable mid-edges at the $k$-th step.

\begin{figure*}
\begin{minipage}[c]{0.48\textwidth}
\centering
\includegraphics[height=1.9in,width=2.40in]{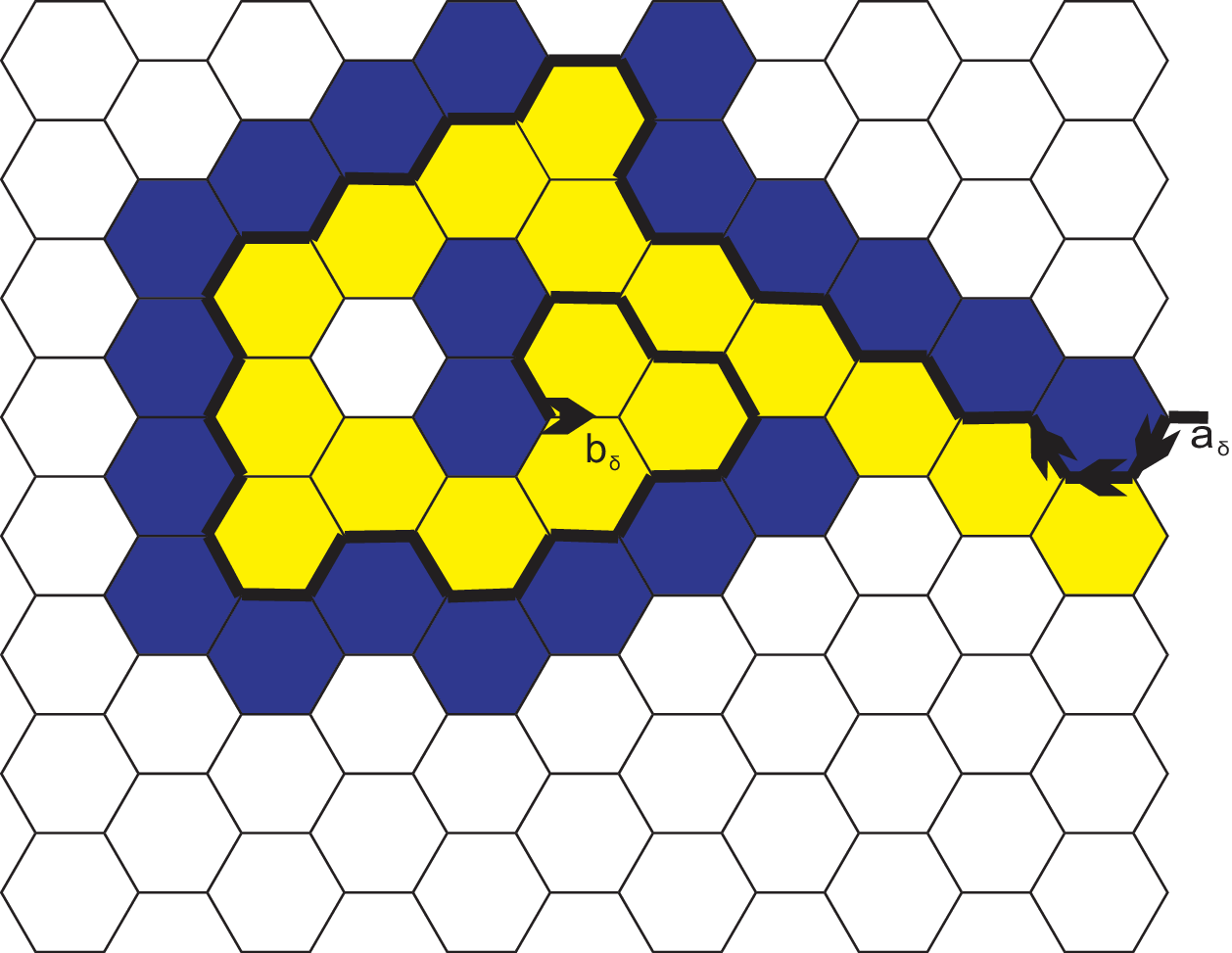}
\caption{$\gamma_{\delta}$. }\label{fig2}
\end{minipage}
\begin{minipage}[c]{0.48\textwidth}
\centering
\includegraphics[height=1.9in,width=2.40in]{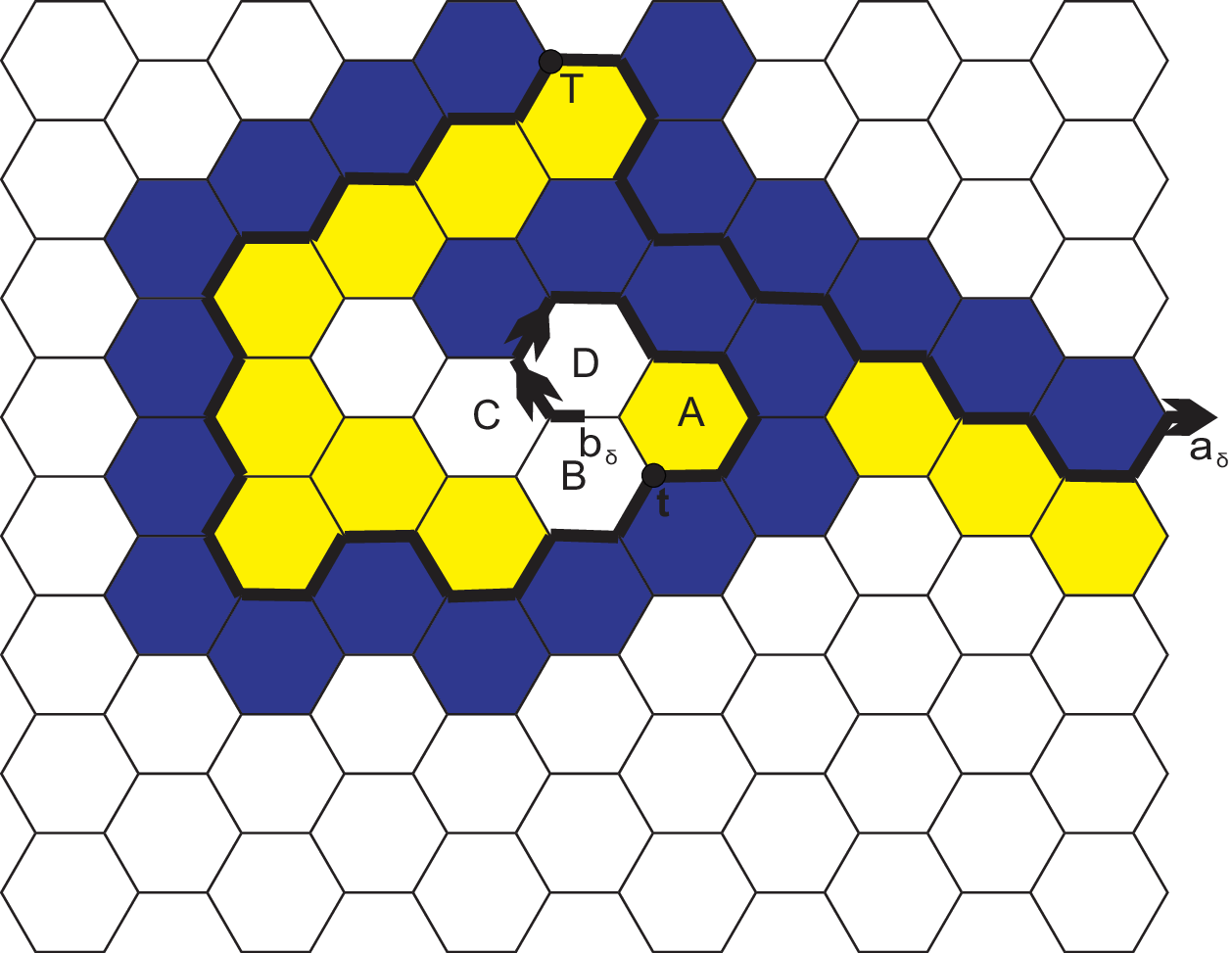}
\caption{$\gamma_{\delta}^{\prime}$.}\label{fig3}
\end{minipage}
\end{figure*}

The coloring algorithm for $\gamma_{\delta}^{\prime}$ is similar. We let $\gamma_{\delta}^{\prime}(0)=b_{\delta}$ and $\gamma_{\delta}^{\prime}(1)$ is picked with probability $1/4$ independently from the 4 mid-edges in $D_{\delta}$ with distance $\delta$ from $b_{\delta}$. Let $A, B, C, D$ be the four hexagons within distance $\delta$ from $b_{\delta}$ (the order does not matter). In the $k$-th step ($k\geq 2$), there are at most two mid-edges (call them allowable) that are within distance $\delta$ from $\gamma_{\delta}^{\prime}(k-1)$ and connected to $a_{\delta}$ in $D_{\delta}\cup\gamma_{\delta}[0,1/2]\setminus\gamma_{\delta}^{\prime}[0,k-1]$. Let $\xi$ be the hexagon centered at $\gamma_{\delta}^{\prime}(k-1)+3[\gamma_{\delta}^{\prime}(k-1)-\gamma_{\delta}^{\prime}(k-3/2)]$.
\begin{itemize}
\item If $\xi$ has not been colored and is in $D_{\delta}\setminus\{A,B,C,D\}$, we randomly color $\xi$ blue or yellow with probability $1/2$, and we choose the right half-edge (with respect to $\gamma_{\delta}^{\prime}[k-1,k-1/2]$) if $\xi$ is blue, or left half-edge if $\xi$ is yellow;
\item if $\xi$ is not in $D_{\delta}$ and there are two allowable mid-edges, this happens exactly when  $\gamma_{\delta}^{\prime}$ first hits $\partial D_{\delta}$, then we choose each of the allowable mid-edges with probability $1/2$ independently;
\item if $\xi$ is not in $D_{\delta}$ and there is only one allowable mid-edge then we choose this allowable mid-edge;
\item if $\xi$ has been colored and is in $D_{\delta}\setminus\{A,B,C,D\}$ then there is only one allowable mid-edges, and we choose this allowable mid-edge;
\item if $\xi$ is in $\{A, B, C, D\}$ and there are two allowable mid-edges, we randomly color $\xi$ blue or yellow with probability $1/2$, and we choose the right half-edge if $\xi$ is blue, or left half-edge if $\xi$ is yellow;
\item if $\xi$ is in $\{A, B, C, D\}$ and there is only one allowable mid-edges, we choose this allowable mid-edge.
\end{itemize}
We denote the endpoint of the new chosen half-edge or the new chosen mid-edge by $\gamma_{\delta}^{\prime}(k)$. The algorithm stops when $\gamma_{\delta}^{\prime}$ reaches $a_{\delta}$. This coloring algorithm generates paths with the same distribution as the radial exploration
precess from $b_{\delta}$ to $a_{\delta}$ because: for $k\geq 2$ and $k$ is not the first hitting time of $\partial D_{\delta}$,
 we color $\xi$ at the $k$-th step if and only if there are two allowable mid-edges at the $k$-th step.

See figure \ref{fig2} (respectively, figure \ref{fig3}) for a realization of $\gamma_{\delta}$ (respectively, $\gamma_{\delta}^{\prime}$). Note that neither $\gamma_{\delta}$ nor $\gamma_{\delta}^{\prime}$ is the interface separating yellow hexagons from blue hexagons.

From the coloring algorithm for $\gamma_{\delta}$, we can see $\gamma_{\delta}(k), k\geq 2$ has only one choice if and only if the hexagon $\xi$ centered at $\gamma_{\delta}(k-1)+3[\gamma_{\delta}(k-1)-\gamma_{\delta}(k-3/2)]$ has been colored or is in the complement of $D_{\delta}$. Therefore, each walk $\gamma_{\delta}$ has weight $(1/2)^{l(\gamma_{\delta})}$ where $l(\gamma_{\delta})$ is the number of colored hexagons in $D_{\delta}$ produced by the coloring algorithm for $\gamma_{\delta}$, which is also the number of hexagons in $D_{\delta}$ that share at least a half-edge with $\gamma_{\delta}$.

Similarly, the weight of each $\gamma_{\delta}^{\prime}$ is $1/4*(1/2)*(1/2)^{l^{\prime}(\gamma_{\delta}^{\prime})}$ where $l^{\prime}(\gamma_{\delta}^{\prime})$ is the number of colored hexagons in $D_{\delta}$ produced by the coloring algorithm for $\gamma_{\delta}^{\prime}$, here the factor $1/4$ comes from the first step (i.e., $\gamma_{\delta}^{\prime}(1)$) and the factor $1/2$ comes from the first time $\gamma_{\delta}^{\prime}$ hits $\partial D_{\delta}$ (since the corresponding $\xi$ for the first hitting of $\partial D_{\delta}$ is not in $D_{\delta}$ but we still have two choices).

For any simple polygonal path $\omega$ from $a_{\delta}$ to $b_{\delta}$, we claim that t $l(\omega)=l^{\prime}(\omega)+3$. Recall that $l(\omega)$ (respectively, $l^{\prime}(\omega)$) is the number of colored hexagons in $D_{\delta}$ produced by the coloring algorithm for $\gamma_{\delta}$ (respectively, $\gamma_{\delta}^{\prime}$) on the event $\gamma_{\delta}=\gamma_{\delta}^{\prime}=\omega$. The claim is true because:
\begin{itemize}
\item On the event $\gamma_{\delta}=\gamma_{\delta}^{\prime}=\omega$, for any hexagon in $D_{\delta}\setminus\{A,B,C,D\}$, either it is colored by both the coloring algorithm for $\gamma_{\delta}$ and the coloring algorithm for $\gamma_{\delta}^{\prime}$ or by neither. Actually, in $D_{\delta}\setminus\{A,B,C,D\}$, only those hexagons that share at least an edge with $\omega$ are colored.
    \item Number of hexagons in $\{A,B,C,D\}$ that share at least a half-edge with $\omega$ is either 3 or 4.
    \begin{itemize}
    \item If this number is 3, on the event $\gamma_{\delta}=\gamma_{\delta}^{\prime}=\omega$, the coloring algorithm for $\gamma_{\delta}$ colors three hexagons in $\{A,B,C,D\}$, in which case, the coloring algorithm for $\gamma_{\delta}^{\prime}$ colors none of $\{A,B,C,D\}$;
        \item If this number is 4, on the event $\gamma_{\delta}=\gamma_{\delta}^{\prime}=\omega$, the coloring algorithm for $\gamma_{\delta}$ colors all four hexagons in $\{A,B,C,D\}$, in which case, the coloring algorithm for $\gamma_{\delta}^{\prime}$ colors one hexagon of $\{A,B,C,D\}$.
    \end{itemize}
\end{itemize}

Note that in figure \ref{fig3}, at time t, the hexagon $\xi$ (which is labeled by B) centered at $\gamma_{\delta}^{\prime}(t)+[\gamma_{\delta}^{\prime}(t)-\gamma_{\delta}^{\prime}(t-1)]$ is uncolored because there is only one allowable mid-edge. Among the four hexagons (A,B,C,D) within distance $\delta$ from $b_{\delta}$, B, C and D are not colored. At time T, the first hitting time of $\gamma_{\delta}^{\prime}$ with $\partial D_{\delta}$, the hexagon $\xi$ centered at $\gamma_{\delta}^{\prime}(T)+[\gamma_{\delta}^{\prime}(T)-\gamma_{\delta}^{\prime}(T-1)]$ is not in $D_{\delta}$ but there are two allowable mid-edge. Therefore, we arrive at the following lemma:

\begin{lemma}\label{lem3.1}
Suppose $D_{\delta}$ is a simply connected domain in $\mathcal{H}_{\delta}$. For any simple polygonal path $\omega$ from $a_{\delta}$ to $b_{\delta}$, we have
$$P(\gamma_{\delta}=\omega)=P(\gamma_{\delta}^{\prime}=\omega)=(1/2)^{l(\omega)},$$
where $l(\omega)$ is the number of hexagons in $D_{\delta}$ sharing at least a half-edge with $\omega$.
\end{lemma}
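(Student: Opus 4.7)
The plan is to establish both probabilities by reading them off from the two coloring algorithms defined in the section, and then to reconcile the two weight formulas via a combinatorial identity relating the number of colored hexagons.

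First, I would verify the weight formula for $\gamma_\delta$. From the coloring algorithm for $\gamma_\delta$, at step $k \geq 2$ there is genuine randomness precisely when the hexagon $\xi$ centered at $\gamma_\delta(k-1) + 3[\gamma_\delta(k-1) - \gamma_\delta(k-3/2)]$ lies in $D_\delta$ and has not yet been colored; in that case $\xi$ receives a color with probability $1/2$ and this determines $\gamma_\delta(k)$. Otherwise $\gamma_\delta(k)$ is forced. So the probability of producing $\omega$ equals $(1/2)^{c(\omega)}$, where $c(\omega)$ counts the hexagons colored during the algorithm when it traces $\omega$. A short inductive argument — identical to the one used for the chordal process — shows that the hexagons that get colored are exactly the hexagons of $D_\delta$ that share at least a half-edge with $\omega$, so $c(\omega) = l(\omega)$. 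This gives $P(\gamma_\delta = \omega) = (1/2)^{l(\omega)}$.

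Next, I would analyze $\gamma'_\delta$. The first step contributes a factor $1/4$ for the choice of $\gamma'_\delta(1)$ among the four mid-edges adjacent to $b_\delta$. For $k \geq 2$, the hexagon $\xi$ centered at $\gamma'_\delta(k-1) + 3[\gamma'_\delta(k-1) - \gamma'_\delta(k-3/2)]$ governs the step exactly as before when $\xi \in D_\delta$, producing a factor $1/2$; the single exception is the first time $\gamma'_\delta$ meets $\partial D_\delta$, where $\xi \notin D_\delta$ but two allowable mid-edges remain and a coin is flipped independently of any coloring. Thus $P(\gamma'_\delta = \omega) = \tfrac{1}{4} \cdot \tfrac{1}{2} \cdot (1/2)^{l'(\omega)}$, where $l'(\omega)$ counts the hexagons of $D_\delta$ colored by the reverse algorithm when producing $\omega$.

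The heart of the argument is then the identity $l(\omega) = l'(\omega) + 3$. I expect this to be the main obstacle and the step worth writing out carefully. The identification breaks into two observations. On one hand, the set of hexagons of $D_\delta$ that share a half-edge with $\omega$ is an intrinsic property of $\omega$, independent of orientation. On the other hand, the three hexagons among the four neighbors of $b_\delta$ that are not traversed as the terminal hexagon still share half-edges with $\omega$, yet they are never colored by the reverse algorithm: the first step of $\gamma'_\delta$ picks one of the four edges at $b_\delta$ without coloring anything, and the hexagon lookahead $\xi$ for subsequent steps always points away from $b_\delta$, so no step near $b_\delta$ colors any of these three. One must check (via an enumeration of local configurations at $b_\delta$, as illustrated in Figure \ref{fig3}) that exactly three hexagons are missed in this way and that every other hexagon sharing a half-edge with $\omega$ is indeed colored by the reverse algorithm. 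Granted this, $l'(\omega) = l(\omega) - 3$.

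Combining the three pieces yields
\[
P(\gamma'_\delta = \omega) \;=\; \tfrac{1}{4} \cdot \tfrac{1}{2} \cdot (1/2)^{l(\omega)-3} \;=\; (1/2)^{l(\omega)} \;=\; P(\gamma_\delta = \omega),
\]
as required. The anticipated difficulties are purely local and combinatorial: verifying the missing-three-hexagons count around $b_\delta$, and checking that the lone boundary-hitting step (where two mid-edges are allowable without $\xi$ being in $D_\delta$) contributes exactly the bookkeeping factor $1/2$ and nothing else, so that no extra discrepancy is introduced between the forward and reverse weights.
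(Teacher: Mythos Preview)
Your proposal is correct and follows essentially the same approach as the paper: the paper's argument (given in the paragraph immediately preceding the lemma) likewise reads off $(1/2)^{l(\omega)}$ for $\gamma_\delta$ from the coloring algorithm, computes $P(\gamma'_\delta=\omega)=\tfrac14\cdot\tfrac12\cdot(1/2)^{l'(\omega)}$ with the extra $\tfrac14$ from the first step and the extra $\tfrac12$ from the first boundary hit, and then invokes the identity $l(\omega)=l'(\omega)+3$ coming from the three uncolored hexagons among the four neighbors of $b_\delta$. Your write-up is somewhat more explicit about why the combinatorial identity holds, but the structure and key steps are identical.
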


\begin{proof}[Proof of Lemma \ref{lem1}]
Lemma \ref{lem1} follows directly from Lemma \ref{lem3.1}.
\end{proof}

\section{Convergence of radial exploration process}\label{sec4}
In this section, we will prove Theorem \ref{thm1} using the strategy outlined by Werner \cite{Wer07} and the techniques developed by Camia and Newman (\cite{CN06} and \cite{CN07}). The idea of the proof is to find a family of stopping times for the radial exploration process and the radial SLE$_6$, and then we will show the discrete process converges to the corresponding continuous one in each time interval. We will assume $D=\mathbb{D}$ and $a=1$, $b=0$ since the proof for general Jordan domains is similar.

\begin{figure}
\centering
\includegraphics[height=2.5in,width=3in]{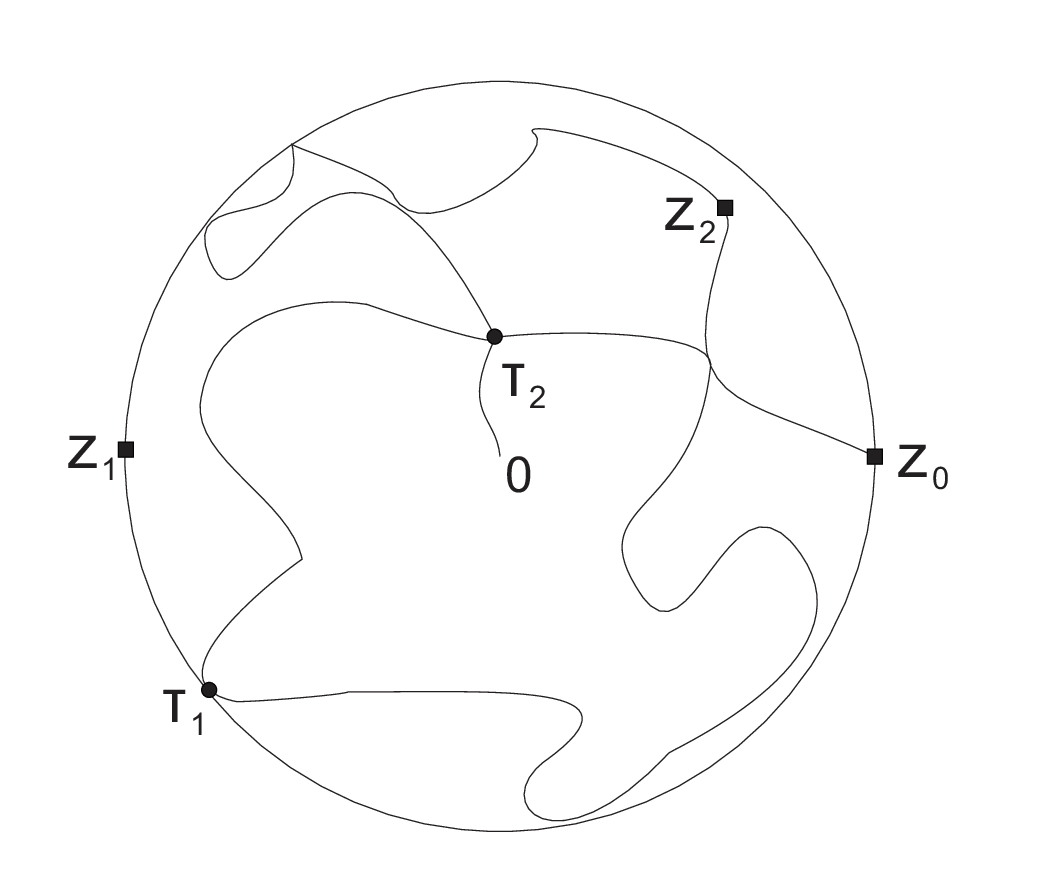}
\caption{The continuous construction }\label{fig4}
\end{figure}

\subsection{The continuous construction}
Let $\gamma(t), 0\leq t<\infty$ be the trace of the radial SLE$_6$ in $\mathbb{D}$ from $a=1$ to $b=0$. For any domain $D\subseteq \mathbb{D}$, let $d_x(D)$ and $d_y(D)$ be respectively the maximal $x-$ and $y-$ distance between pairs of points in $D$, i.e., $d_x(D):=\sup\{|\text{Re}(w-\tilde{w})|:w,\tilde{w}\in D\}$. In the first step, let $z_0=1$ and $D_1=\mathbb{D}$. If $d_x(D_1)\geq d_y(D_1)$ then we choose any $z_1\in\partial D_1$ satisfying $|\text{Re}(z_1-z_0)|\geq d_x(D_1)/2 $, otherwise we choose any $z_1\in\partial D_1$ satisfying $|\text{Im}(z_1-z_0)|\geq d_y(D_1)/2 $; let $\tau_1=\inf\{t\geq0: \text{ there is no path from } 0 \text{ to } z_1 \text{ in } D_1\setminus\gamma[0,t]\}$; let $D_2$ be the connected component of $D_1\setminus\gamma[0,\tau_1]$ that contains $0$. In the $k$-th ($k\geq 2$) step, if $d_x(D_k)\geq d_y(D_k)$ then we choose any $z_k\in\partial D_k$ satisfying $|\text{Re}(z_k-\gamma(\tau_{k-1}))|\geq d_x(D_k)/2 $, otherwise we choose any $z_k\in\partial D_k$ satisfying $|\text{Im}(z_k-\gamma(\tau_{k-1}))|\geq d_y(D_k)/2 $; let $\tau_k=\inf\{t\geq\tau_{k-1}:\text{ there is no path from } 0 \text{ to } z_k \text{ in } D_k\setminus\gamma[\tau_{k-1},t] \}$; let $D_{k+1}$ be the connected component of $D_k\setminus\gamma[\tau_{k-1},\tau_k]$ that contains $0$. See figure \ref{fig4} for an illustration of the first two steps of the continuous construction. We are not able to prove for $k\geq 2$, $d_x(D_k)\neq d_y(D_k)$ a.s., and this is largely responsible for the lengthy proof of Lemma \ref{lem4.2}.

\subsection{The discrete construction}
The discrete construction is based on the continuous construction, so we will use notations defined in the continuous construction. Let $D_1^{\delta}$ be the largest connected component of hexagons of $\mathbb{D}\cap \mathcal{H}_{\delta}$ where $\mathcal{H}_{\delta}$ is the hexagonal lattice with mesh $\delta$. Let $z_0^{\delta}$ be a closest mid-edge to $z_0=1$ in the set of mid-edges outside of $D_1^{\delta}$ but within $\delta/2$ distance from the topological boundary of $D_1^{\delta}$, $\partial D_1^{\delta}$, and let $b^{\delta}$ be a closest mid-edge to $b=0$ in $D_1^{\delta}$. Let $\gamma_{\delta}$ be the radial exploration process from $z_0^{\delta}$ to $b^{\delta}$ in $D_1^{\delta}$ (see the definition in the introduction). In the first step, if $d_x(D_1^{\delta})\geq d_y(D_1^{\delta})$ and $d_x(D_1)\neq d_y(D_1)$ then we choose $z_1^{\delta}$ to be any mid-edge $z$ in $\{z\in\partial D_1^{\delta}: |\text{Re}(z-z_0^{\delta})|\geq d_x(D_1^{\delta})/2 \}$ such that $z$ minimizes $|z-z_1|$; if $d_x(D_1^{\delta})< d_y(D_1^{\delta})$ and $d_x(D_1)\neq d_y(D_1)$ then we choose $z_1^{\delta}$ to be any mid-edge $z$ in $\{z\in\partial D_1^{\delta}: |\text{Im}(z-z_0^{\delta})|\geq d_y(D_1^{\delta})/2 \}$ such that $z$ minimizes $|z-z_1|$; otherwise (i.e., $d_x(D_1)= d_y(D_1)$) we choose $z_1^{\delta}$ to be any mid-edge $z$ in $\{z\in\partial D_1^{\delta}: |\text{Re}(z-z_0^{\delta})|\geq d_x(D_1^{\delta})/2 \}$ such that $z$ minimizes $|z-z_1|$. Let $\tau_1^{\delta}=\inf\{t\geq0:\text{there is no polygonal path from }b^{\delta} \text{ to } z_1^{\delta} \text{ in } \overline{D_1^{\delta}\setminus \Gamma(\gamma_{\delta}[0,t])} \}$ where $\Gamma(\gamma_{\delta}[0,t])$ is the set of hexagons in $D_1^{\delta}$ sharing at least an edge with $\gamma_{\delta}[0,t]$, and note that here we view $D_1^{\delta}$ and $\Gamma(\gamma_{\delta}[0,t])$ as subsets of $\mathbb{C}$ and the overline means the closure. Let $D_2^{\delta}$ be the connected component of $\overline{D_1^{\delta}\setminus \Gamma(\gamma_{\delta}[0,\tau_1^{\delta}])}$ that contains $b^{\delta}$. Let $\sigma_1^{\delta}:=\inf\{t\geq0: \gamma_{\delta}(t)\in D_2^{\delta}\}$. Note that  $\gamma_{\delta}(\sigma_1^{\delta})$ and  $\gamma_{\delta}(\tau_1^{\delta})$ are on the boundary of the same hexagon. See figure \ref{fig5} for an illustration of the first step of the discrete construction.
In the $k$-th ($k\geq 2$) step, if $d_x(D_k^{\delta})\geq d_y(D_k^{\delta})$ and $d_x(D_k)\neq d_y(D_k)$ then we choose $z_k^{\delta}$ to be any mid-edge $z$ in $\{z\in\partial D_k^{\delta}: |\text{Re}(z-\gamma_{\delta}(\sigma_{k-1}^{\delta}))|\geq d_x(D_k^{\delta})/2 \}$ such that $z$ minimizes $|z-z_k|$; if $d_x(D_k^{\delta})< d_y(D_k^{\delta})$ and $d_x(D_k)\neq d_y(D_k)$ then we choose $z_k^{\delta}$ to be any mid-edge $z$ in $\{z\in\partial D_k^{\delta}: |\text{Im}(z-\gamma_{\delta}(\sigma_{k-1}^{\delta}))|\geq d_y(D_k^{\delta})/2 \}$ such that $z$ minimizes $|z-z_k|$; otherwise (i.e., $d_x(D_k)=d_y(D_k)$) we choose $z_k^{\delta}$ to be any mid-edge $z$ in $\{z\in\partial D_k^{\delta}: |\text{Re}(z-\gamma_{\delta}(\sigma_{k-1}^{\delta}))|\geq d_x(D_k^{\delta})/2 \}$ such that $z$ minimizes $|z-z_k|$. Let $\tau_k^{\delta}=\inf\{t\geq\sigma_{k-1}^{\delta}:\text{there is no polygonal path from }b^{\delta} \text{ to } z_k^{\delta} \text{ in } \overline{D_k^{\delta}\setminus \Gamma(\gamma_{\delta}[\sigma_{k-1}^{\delta},t])} \}$. Let $D_{k+1}^{\delta}$ be the connected component of $\overline{D_k^{\delta}\setminus \Gamma(\gamma_{\delta}[\sigma_{k-1}^{\delta},\tau_k^{\delta}])}$ that contains $b^{\delta}$. Let $\sigma_k^{\delta}:=\inf\{t\geq0: \gamma_{\delta}(t)\in D_{k+1}^{\delta}\}$.

\begin{figure}
\centering
\includegraphics[height=2.5in,width=3.2in]{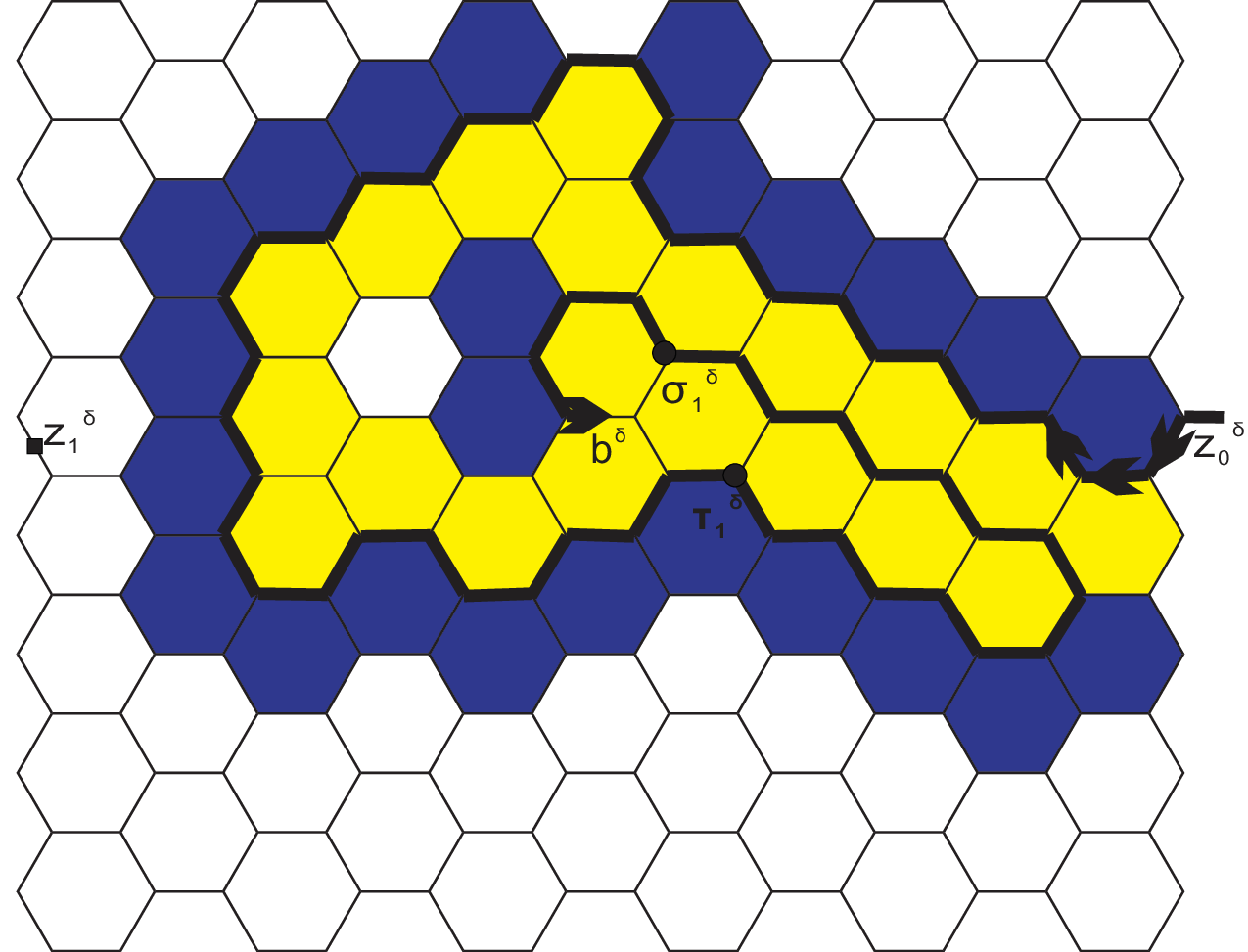}
\caption{The discrete construction }\label{fig5}
\end{figure}

\subsection{Proof of Theorem \ref{thm1}}
The following lemma says that there is no difference between $\tau_1^{\delta}$ and $\sigma_1^{\delta}$ when $\delta$ approaches $0$.
\begin{lemma}\label{lem4.1}
$(\gamma_{\delta}[0,\tau_1^{\delta}],\partial D_2^{\delta})$ converges jointly in distribution to $(\gamma[0,\tau_1],\partial D_2)$. And\\ $(\gamma_{\delta}[0,\sigma_1^{\delta}],\partial D_2^{\delta})$ converges jointly in distribution to $(\gamma[0,\tau_1],\partial D_2)$
\end{lemma}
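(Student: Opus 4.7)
The plan is to reduce the first phase of the exploration to a chordal exploration, invoke the known convergence of the chordal percolation interface to chordal SLE$_6$, and then identify the limit via the locality property. As a first step I would show that, up to time $\tau_1^\delta$, the radial exploration $\gamma_\delta$ from $z_0^\delta$ to $b^\delta$ in $D_1^\delta$ coincides in distribution with the chordal exploration from $z_0^\delta$ to $z_1^\delta$ in $D_1^\delta$. The point is that for every $t<\tau_1^\delta$ the hexagons traced by $\gamma_\delta[0,t]$ do not yet disconnect $b^\delta$ from $z_1^\delta$ inside $D_1^\delta$, so the allowable mid-edges for the radial rule (those connected to $b^\delta$ through the unexplored region) coincide with the allowable mid-edges for the chordal rule (connected to $z_1^\delta$), and the random choices (uniform over $1$ or $2$ options) agree. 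Applying the chordal convergence theorem of Camia--Newman \cite{CN07} (equivalently the strategy of Smirnov elaborated by Werner \cite{Wer07}), the chordal exploration path in $D_1^\delta$ converges weakly in the uniform metric to the chordal SLE$_6$ trace in $\mathbb{D}$ from $1$ to $z_1$, stopped at the first disconnection of $z_1$ from $0$. By the locality property of SLE$_6$ \cite{LSW01}, this stopped chordal trace has the same distribution as the radial SLE$_6$ trace $\gamma[0,\tau_1]$, yielding weak convergence of $\gamma_\delta[0,\tau_1^\delta]$ to $\gamma[0,\tau_1]$.

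For the joint convergence with $\partial D_2^\delta$ I would argue that $D_2^\delta$ is a deterministic functional of the pair $(\gamma_\delta[0,\tau_1^\delta], D_1^\delta)$, namely the connected component of $\overline{D_1^\delta\setminus\Gamma(\gamma_\delta[0,\tau_1^\delta])}$ containing $b^\delta$, and analogously $D_2$ is determined by $\gamma[0,\tau_1]$ and $\mathbb{D}$. After passing to the Skorokhod representation so that $\gamma_\delta[0,\tau_1^\delta]\to\gamma[0,\tau_1]$ almost surely in the uniform metric (and $D_1^\delta\to\mathbb{D}$ in the Carathéodory sense), Hausdorff convergence of $\partial D_2^\delta$ to $\partial D_2$ then follows from continuity of the ``component containing $0$'' map at almost every SLE$_6$ realization.

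For the statement involving $\sigma_1^\delta$, note that by construction $\gamma_\delta(\sigma_1^\delta)$ and $\gamma_\delta(\tau_1^\delta)$ lie on the boundary of the same hexagon, so the segment $\gamma_\delta[\tau_1^\delta,\sigma_1^\delta]$ travels at most around one hexagon and has diameter $O(\delta)$. Hence $\gamma_\delta[0,\sigma_1^\delta]$ lies within $O(\delta)$ of $\gamma_\delta[0,\tau_1^\delta]$ in the uniform metric, and both sequences share the same weak limit $\gamma[0,\tau_1]$; the joint statement with $\partial D_2^\delta$ follows from the first part.

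The main obstacle I anticipate is justifying the continuity of the functional sending a curve to the boundary of its complementary component containing $0$, evaluated on SLE$_6$ paths. Since radial SLE$_6$ is non-simple and touches $\partial\mathbb{D}$ on a fractal set, a priori a small perturbation of the discrete trace (together with $D_1^\delta\to\mathbb{D}$) could create or destroy macroscopic bubbles and thereby alter $D_2^\delta$ discontinuously. Ruling out this behaviour in the scaling limit requires the uniform six-arm and boundary crossing estimates from \cite{CN06, CN07}, which guarantee that with high probability no ``pinch points'' of size much smaller than the diameter of $D_2$ affect the relevant component; this is the delicate technical input on which the joint convergence rests.
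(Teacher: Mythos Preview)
Your overall strategy—reduce the first phase to a chordal exploration, invoke Camia--Newman, and identify the limit via locality of SLE$_6$—is exactly the paper's. Two steps are understated, and the second is a genuine error.

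First, the Camia--Newman theorem gives convergence of the \emph{full} chordal exploration path, not of the path stopped at $\tilde\tau_1^\delta$. Passing to the stopped path requires showing, under a Skorokhod coupling, that $\tilde\gamma_\delta(\tilde\tau_1^\delta)\to\tilde\gamma(\tilde\tau_1)$ almost surely; this is where the paper (following Werner's Lemma~3.1) first invokes the arm estimates, arguing that any gap between the $\liminf$ and $\limsup$ of the discrete disconnection point forces a 6-arm event in the bulk or a 3-arm event at $\partial\mathbb{D}$. You defer arm bounds to the boundary-convergence step, but they are already essential here.

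Second, your claim that $\gamma_\delta[\tau_1^\delta,\sigma_1^\delta]$ ``travels at most around one hexagon and has diameter $O(\delta)$'' does not follow from the premise. What the construction gives (and what the paper states) is only that the \emph{endpoints} $\gamma_\delta(\tau_1^\delta)$ and $\gamma_\delta(\sigma_1^\delta)$ lie on the boundary of the same hexagon. The intervening segment is a simple curve with nearly coincident endpoints that may make a macroscopic excursion through the explored region $\Gamma(\gamma_\delta[0,\tau_1^\delta])$ (or a pocket of its complement other than $D_2^\delta$) before first entering $D_2^\delta$. The paper rules this out by observing that such an excursion would again create a 6-arm event near $\gamma(\tau_1)$, contradicting Lemma~6.1 of \cite{CN06}. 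So the conclusion $\gamma_\delta[\tau_1^\delta,\sigma_1^\delta]\to\gamma(\tau_1)$ is right, but the mechanism is the arm bound, not a deterministic $O(\delta)$ estimate.
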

\begin{proof}
Note that $\gamma_{\delta}[0,\tau_1^{\delta}]$ has the same distribution as the chordal exploration process (say $\tilde{\gamma}_{\delta}$) from $z_0^{\delta}$ to $z_1^{\delta}$ (one needs to shift $z_1^{\delta}$ by distance $\delta/2$, but we still use the same letter and this should not cause any confusion) in $D_1^{\delta}$ up to a similar defined stopping time $\tilde{\tau}_1^{\delta}$. Moreover, the
locality property of SLE$_6$ (see Proposition 4.1 of \cite{Wer07}) implies $\gamma[0,\tau_1]$ has the same distribution as the chordal SLE$_6$ (say $\tilde{\gamma}$) from $z_0$ to $z_1$ in $D_1$ up to a similar defined stopping time $\tilde{\tau}_1$. From Theorem 5 of \cite{CN07}, we know $\tilde{\gamma}_{\delta}$ converges weakly to $\tilde{\gamma}$. So Theorem 6.7 of \cite{Bil99} implies we can find coupled versions of $\tilde{\gamma}_{\delta}$ and $\tilde{\gamma}$ on the same probability space such that $d(\tilde{\gamma}_{\delta},\tilde{\gamma})\rightarrow 0$ a.s. as $\delta\downarrow 0$. Under this coupling, we claim
$$\lim_{\delta\downarrow 0}\tilde{\gamma}_{\delta}(\tilde{\tau}_1^{\delta})=\tilde{\gamma}(\tilde{\tau}_1)\text{ a.s.}$$
This is actually Lemma 3.1 of \cite{Wer07}, and we will give a slightly different argument. It is obvious that any subsequential limit of $\tilde{\gamma}_{\delta}(\tilde{\tau}_1^{\delta})$ is in $\tilde{\gamma}[0,\infty]$, so we can define a linear ordering on $\tilde{\gamma}[0,\infty]$ such that $\tilde{\gamma}(t_1)\leq\tilde{\gamma}(t_2)$ if $t_1\leq t_2$. Under this ordering clearly we have  $\liminf_{\delta\downarrow 0}\tilde{\gamma}_{\delta}(\tilde{\tau}_1^{\delta})\in\tilde{\gamma}[\tilde{\tau}_1,\infty]$. On the other hand, suppose $\limsup_{\delta\downarrow 0}\tilde{\gamma}_{\delta}(\tilde{\tau}_1^{\delta})\in\tilde{\gamma}(\tilde{\tau}_1,\infty]$, then along some subsequence of $\delta$ either the 6-arm (not all of the same color) event occurs in $\mathbb{D}$ or the 3-arm (not all of the same color) event occurs on $\partial \mathbb{D}$, which contradicts Lemma 6.1 of \cite{CN06}, and thus  $\limsup_{\delta\downarrow 0}\tilde{\gamma}_{\delta}(\tilde{\tau}_1^{\delta})\in\tilde{\gamma}[0,\tilde{\tau}_1]$. Therefore the claim follows. It is clear that $\tilde{\gamma}(\tilde{\tau}_1)\notin\tilde{\gamma}[0,\infty)\setminus\{\tilde{\gamma}(\tilde{\tau}_1)\}$ a.s. since otherwise $\tilde{\gamma}$ would hit the same boundary point twice or have a triple point. Therefore, $\tilde{\gamma}_{\delta}[0,\tilde{\tau}_1^{\delta}]$ converges a.s to $\tilde{\gamma}[0,\tilde{\tau}_1]$. In particular, this implies $\gamma_{\delta}[0,\tau_1^{\delta}]$ converges weakly to $\gamma[0,\tau_1]$ in the metric \eqref{eq2.1}. Let $\tilde{D}_2^{\delta}$ be the unique domain in $\overline{D_1^{\delta}\setminus\Gamma(\tilde{\gamma}_{\delta}[0,\tilde{\tau}_1^{\delta}])}$ that contains $b^{\delta}$. Then Lemma 5.2 of \cite{CN06} implies $\partial \tilde{D}_2^{\delta}$ converges weakly to $\partial D_2$. Note that $\partial D_2^{\delta}$ has the same distribution as $\partial \tilde{D}_2^{\delta}$, and thus $\partial D_2^{\delta}$ converges weakly to $\partial D_2$. So the first part the lemma follows. For the second part of the lemma, note that $\gamma_{\delta}(\sigma_1^{\delta})$ and  $\gamma_{\delta}(\tau_1^{\delta})$ are on the boundary of the same hexagon. Moreover we have $\lim_{\delta\downarrow 0}\gamma_{\delta}[\tau_1^{\delta},\sigma_1^{\delta}]=\gamma(\tau_1)$ since otherwise the 6-arm event would occur which contradicts Lemma 6.1 of \cite{CN06}.
\end{proof}

Next, we extend Lemma \ref{lem4.1} to all $k\in\mathbb{N}$.
\begin{theorem}\label{thm4.1}
For any $k\in\mathbb{N}$, $\gamma_{\delta}[0,\tau_k^{\delta}]$ converges weakly to $\gamma[0,\tau_k]$ and $\gamma_{\delta}[0,\sigma_k^{\delta}]$ converges weakly to $\gamma[0,\tau_k]$. Moreover, for any $\epsilon>0$, let $K_{\delta}^{\epsilon}:=\inf\{k\geq 1: \text{diam}(D_k^{\delta}):=\sup\{|x-y|:x,y\in D_k^{\delta}\}<\epsilon\}$, then
$$\lim_{C\rightarrow\infty}\limsup_{\delta\downarrow 0}P(K_{\delta}^{\epsilon}>C)=0.$$
\end{theorem}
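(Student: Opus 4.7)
The plan is to prove the first assertion by induction on $k \in \mathbb{N}$, with Lemma \ref{lem4.1} supplying the base case $k=1$, and then to deduce the tail bound on $K_\delta^\epsilon$ from the almost-sure finiteness of the corresponding quantity for the continuous process combined with this joint convergence.

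For the inductive step I assume $(\gamma_\delta[0,\sigma_{k-1}^\delta],\partial D_k^\delta)$ converges jointly in distribution to $(\gamma[0,\tau_{k-1}],\partial D_k)$. Passing to a probability space on which this convergence is almost sure via Skorohod's theorem, Hausdorff convergence of the boundaries yields $d_x(D_k^\delta)\to d_x(D_k)$ and $d_y(D_k^\delta)\to d_y(D_k)$. A short case analysis separating the generic situation $d_x(D_k)\neq d_y(D_k)$ from the degenerate one $d_x(D_k)=d_y(D_k)$, in which both the continuous and the discrete rules select the $x$-direction by convention, then produces $z_k^\delta\to z_k$ almost surely, with any ambiguity in the ``any mid-edge minimizing $|z-z_k|$'' tiebreak absorbed into an enlargement of the probability space. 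Conditional on the past, $\gamma_\delta[\sigma_{k-1}^\delta,\tau_k^\delta]$ is distributed as a chordal exploration in $D_k^\delta$ from $\gamma_\delta(\sigma_{k-1}^\delta)$ to $z_k^\delta$, stopped when $b^\delta$ is disconnected from $z_k^\delta$; the locality property of SLE$_6$ (Proposition 4.1 of \cite{Wer07}) identifies $\gamma[\tau_{k-1},\tau_k]$ with the corresponding chordal SLE$_6$ in $D_k$ targeting $z_k$, stopped at the matching disconnection time. The argument of Lemma \ref{lem4.1}---combining Theorem 5 of \cite{CN07} with the 6-arm/3-arm estimate Lemma 6.1 of \cite{CN06} to upgrade path convergence to stopping-time convergence---then delivers the desired joint convergence of $(\gamma_\delta[0,\tau_k^\delta],\partial D_{k+1}^\delta)$ to $(\gamma[0,\tau_k],\partial D_{k+1})$, and a further application of the same 6-arm estimate shows that $\gamma_\delta[\tau_k^\delta,\sigma_k^\delta]$ collapses to a single point of the limiting trace.

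For the tail bound on $K_\delta^\epsilon$ I would first argue that in the continuous construction the nested domains $D_k$ satisfy $\text{diam}(D_k)\downarrow 0$ almost surely, so $K_0^\epsilon:=\inf\{k\geq 1:\text{diam}(D_k)<\epsilon\}$ is a.s.\ finite for every $\epsilon>0$. Indeed, the rule that $z_k$ lies at coordinate distance at least $\tfrac12\max(d_x(D_k),d_y(D_k))\geq \text{diam}(D_k)/(2\sqrt{2})$ from $\gamma(\tau_{k-1})$ forces the chord $\gamma[\tau_{k-1},\tau_k]$ to shrink the diameter by a multiplicative factor bounded away from one with positive probability uniform in $k$, by scale invariance of SLE$_6$, yielding geometric tails for $K_0^\epsilon$. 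Combining $P(K_0^\epsilon>C)\to 0$ as $C\to\infty$ with the joint convergence of the first $C$ diameters (a consequence of the first part together with continuity of $\text{diam}(\cdot)$ under Hausdorff convergence) gives $\limsup_{\delta\downarrow 0}P(K_\delta^\epsilon>C)\leq P(K_0^{\epsilon-\eta}>C)$ for any $\eta>0$, and sending first $C\to\infty$ and then $\eta\downarrow 0$ concludes.

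The principal obstacle is the degenerate case $d_x(D_k)=d_y(D_k)$, or its near-degenerate versions, in which the target direction is discontinuous as a function of the domain. Since we cannot rule out that ties occur with positive probability, the construction is forced to reference the continuous quantities $d_x(D_k), d_y(D_k)$ when choosing $z_k^\delta$, and the Skorohod coupling must be executed on a probability space already carrying both the continuous and the discrete processes so that the two rules are coordinated. Provided this is done carefully, the remaining inputs---convergence of chordal exploration to chordal SLE$_6$, locality, and the arm estimates---are by now standard, but keeping the induction honest across all $k$ with the tiebreaking rule consistent is where most of the technical work sits.
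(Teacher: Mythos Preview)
Your inductive argument for the first part is essentially the paper's approach: Lemma \ref{lem4.1} for $k=1$, then at each step identify the conditional law of $\gamma_\delta[\sigma_{k-1}^\delta,\tau_k^\delta]$ with a chordal exploration, invoke convergence to chordal SLE$_6$, use locality to identify the limit with $\gamma[\tau_{k-1},\tau_k]$, and control the stopping times and the gap $[\tau_k^\delta,\sigma_k^\delta]$ via the arm estimates. Your handling of the tie $d_x(D_k)=d_y(D_k)$ is also in the same spirit as the paper's (this is precisely why the discrete rule is allowed to consult $d_x(D_k),d_y(D_k)$).

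Where you diverge from the paper is in the second part. The paper does \emph{not} first establish $\mathrm{diam}(D_k)\to 0$ for the continuous construction and then transfer to the lattice. It proves the uniform-in-$\delta$ tail bound directly on the discrete side: Lemma \ref{lem4.2} uses a Russo--Seymour--Welsh crossing in an explicit rectangle placed between $\gamma_\delta(\sigma_{k-1}^\delta)$ and $z_k^\delta$ to show that, with probability at least $p_0>0$ independent of $\delta$, three consecutive steps reduce $\max\{d_x(D_k^\delta),d_y(D_k^\delta)\}$ by the factor $23/24$. A geometric ``trials'' argument then gives the tail bound on $K_\delta^\epsilon$, and the continuous statement $\mathrm{diam}(D_k)\to 0$ is obtained only afterwards as a corollary (see the Remark following the theorem). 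The advantage of the paper's route is that RSW is a purely lattice input, uniform over \emph{all} shapes of $D_k^\delta$, so no regularity of the random domains is needed.

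Your route is not wrong in principle, but the justification you give has a gap. You write that the diameter shrinks by a fixed factor with uniform positive probability ``by scale invariance of SLE$_6$.'' Scale invariance only lets you normalise $\mathrm{diam}(D_k)=1$; it says nothing about uniformity over the \emph{shape} of $D_k$, which is a random, fractal SLE$_6$ hull and can be arbitrarily distorted. What you actually need is a crossing estimate of RSW type (continuous or discrete) that is insensitive to the ambient domain---exactly the content of Lemma \ref{lem4.2}. Note also that even with RSW in hand the paper needs a fairly careful four-case analysis and \emph{three} steps (not one) to force contraction, because in the degenerate case $d_x(D_k)=d_y(D_k)$ the discrete target direction need not match the dominant discrete diameter; your one-line appeal glosses over this. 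If you want to keep the ``continuous first'' strategy, you should replace ``scale invariance'' by a genuine RSW/annulus-crossing estimate for SLE$_6$ (or for the equivalent Brownian hull), and then your transfer step $\limsup_{\delta\downarrow 0}P(K_\delta^\epsilon>C)\le P(K_0^{\epsilon-\eta}>C)$ via convergence of the first $C$ diameters is fine.
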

\begin{remark}
This theorem implies that $\lim_{k\rightarrow\infty}\text{diam}(D_k)=0$.
\end{remark}
\begin{proof}
We prove the first part of the theorem by induction in $k$.

$k=1$. This is Lemma \ref{lem4.1}.

$k=2$. It follows from \cite{AB99} that $(\gamma_{\delta}[0,\tau_1^{\delta}],\partial D_2^{\delta},\gamma_{\delta}[\sigma_1^{\delta},\tau_2^{\delta}])$ converges jointly in distribution along some subsequence to some limit $(\tilde{\gamma}_1,\partial \tilde{D_2},\tilde{\gamma}_2)$. Lemma \ref{lem4.1} implies $\tilde{\gamma}_1$ is distributed like $\gamma[0,\tau_1]$ and $\partial \tilde{D_2}$ is distributed like $\partial D_2$. Corollary 5.1 and Lemma 5.3 of \cite{CN06}, and similar argument as Lemma \ref{lem4.1} imply $\tilde{\gamma}_2$ is distributed like $\gamma[\tau_1,\tau_2]$. Therefore, we have $(\gamma_{\delta}[0,\tau_1^{\delta}],\partial D_2^{\delta},\gamma_{\delta}[\sigma_1^{\delta},\tau_2^{\delta}])$ converges jointly in distribution to $(\gamma[0,\tau_1],\partial D_2,\gamma[\tau_1,\tau_2])$. Applying Theorem 6.7 of \cite{Bil99}, we see that $\gamma^{\delta}[0,\tau_2^{\delta}]$ converges in distribution (or weakly) to $\gamma[0,\tau_2]$.

$k\geq 3$. All steps for $k\geq 3$ are analogous to the case $k=2$.
For the proof of the second part of the theorem, we need the following lemma.

\begin{lemma}\label{lem4.2}
For any $k\in\mathbb{N}$ and any $\epsilon>0$, if $\max_{j=k,k+1,k+2}\max\{d_x(D_j^{\delta}),d_y(D_j^{\delta})\}\geq\epsilon$ then we have
\begin{equation}\label{eq4.1}
\max\{d_x(D_{k+3}^{\delta}),d_y(D_{k+3}^{\delta})\}\leq \frac{23}{24}\max\{d_x(D_{k}^{\delta}),d_y(D_{k}^{\delta})\}
\end{equation}
with probability at least $p_0$ independent of $\delta$, i.e., \eqref{eq4.1} is true for any $\delta\leq\delta(k)$ for each fixed $k$ where $\delta(k)>0$.
\end{lemma}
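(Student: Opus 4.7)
My approach rests on the geometric observation that in each step $j$ the target $z_j^\delta$ lies at distance at least $M_j/2$ from $\gamma_\delta(\sigma_{j-1}^\delta)$ in the coordinate direction of the current maximum, where $M_j := \max\{d_x(D_j^\delta), d_y(D_j^\delta)\}$. Because $\gamma_\delta[\sigma_{j-1}^\delta, \tau_j^\delta]$ must separate $z_j^\delta$ from $b^\delta$ inside $D_j^\delta$, any shrinkage in that coordinate comes essentially ``for free'': the new domain $D_{j+1}^\delta$ sits on one side of an exploration arc that has traversed half of the domain's extent in that direction. By the planar symmetry of the hexagonal lattice we may assume $d_x(D_k^\delta) \geq d_y(D_k^\delta)$, so that $M_k = d_x(D_k^\delta)$ and step $k$ is in $x$-mode.

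The core technical ingredient I would establish is a uniform single-step estimate: for any $j$ with $M_j \geq \epsilon$ and the construction in $x$-mode, there exists $q = q(\epsilon) > 0$, uniform in $\delta$ small enough, such that $d_x(D_{j+1}^\delta) \leq (23/24) M_j$ with probability at least $q$, and similarly in $y$-mode. To prove this I would invoke the locality property (as already used in the proof of Lemma \ref{lem4.1}) to realize $\gamma_\delta[\sigma_{j-1}^\delta, \tau_j^\delta]$ as a chordal exploration toward $z_j^\delta$ in $D_j^\delta$ stopped at the moment $z_j^\delta$ is disconnected from $b^\delta$, and then apply Russo--Seymour--Welsh crossing and arm estimates for critical site percolation on the triangular lattice of the type used in \cite{CN06,CN07}. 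Geometrically, insert a thin vertical strip of width $\sim M_j/24$ whose central $x$-coordinate is at distance $(23/48)M_j$ from $\gamma_\delta(\sigma_{j-1}^\delta)$ on the $z_j^\delta$ side; RSW produces, with probability bounded below uniformly in $\delta$, a percolation crossing of this strip that blocks any path from $b^\delta$ to any point with larger $x$-displacement, forcing the disconnection and hence the bound on $d_x(D_{j+1}^\delta)$.

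Iteration then delivers the three-step reduction. A single step shrinks only the coordinate direction currently at the maximum, while the orthogonal coordinate may remain essentially unchanged. However among the three consecutive steps indexed by $\{k, k+1, k+2\}$ there are only two coordinate directions to track, so by applying the single-step estimate to each direction (and using the third step as slack) one gets that both $d_x$ and $d_y$ are reduced by the factor $23/24$ on an event of probability at least $p_0 = q^3$, independent of $\delta$. The hypothesis $\max_{j=k,k+1,k+2} M_j \geq \epsilon$ is precisely what keeps a macroscopic rectangle available at each invocation of RSW, so that $q = q(\epsilon)$ does not degenerate to $0$; the condition $\delta \leq \delta(k)$ is used to guarantee that the discrete choice of $z_j^\delta$ lies in the strip dictated by the continuous $z_j$.

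The main obstacle I anticipate is the borderline case in which $d_x(D_j^\delta)$ and $d_y(D_j^\delta)$ are comparable (or the continuous $d_x(D_j) = d_y(D_j)$, which as the authors note cannot be ruled out a.s.), for then the choice of $z_j^\delta$ and whether step $j$ is in $x$- or $y$-mode can be sensitive to microscopic lattice features, and the ``correct'' direction of shrinkage in the first of the three steps is not obvious. A secondary difficulty is ensuring the RSW/arm estimates are uniform over the irregular simply connected lattice domains $D_j^\delta$ carved out by previous explorations, rather than in a fixed smooth reference domain; this is where quantitative RSW theory for percolation in general simply connected subdomains, together with the uniform arm-exponent bounds of \cite{CN06}, becomes essential.
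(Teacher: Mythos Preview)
Your single-step RSW estimate is essentially the paper's own argument: place a rectangle between $\gamma_\delta(\sigma_{j-1}^\delta)$ and $z_j^\delta$, use RSW to force a two-colour crossing of a thin sub-strip, and conclude that the mode coordinate shrinks by a fixed factor with probability bounded below uniformly in $\delta$. Your secondary worry about RSW in irregular domains is unfounded, since the crossing event lives in a fixed rectangle in the full-plane percolation and does not depend on the shape of $D_j^\delta$.

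The gap is in the iteration. Your pigeonhole ``three steps, two directions, so both get reduced'' does not go through, because the mode direction need not be the max direction: in the case the paper calls Case~4 ($d_x(D_j^\delta)<d_y(D_j^\delta)$ but $d_x(D_j)=d_y(D_j)$), the discrete construction forces $x$-mode even though $M_j=d_y(D_j^\delta)$, so the RSW step shrinks $d_x$, not the maximum. Thus your opening claim that ``$z_j^\delta$ lies at distance at least $M_j/2$ in the coordinate direction of the current maximum'' is false in exactly the borderline case you flag at the end, and without handling it the three-step reduction does not follow. The paper's remedy is not a pigeonhole but an explicit case analysis of which of Cases~1--4 occurs at steps $k$, $k+1$, $k+2$: it uses the weak convergence already proved (first part of Theorem~\ref{thm4.1}) to couple so that $|d_x(D_j^\delta)-d_y(D_j^\delta)|<\epsilon/2$ whenever Case~4 occurs, which keeps the rectangle macroscopic, and then checks every two-step combination starting from Case~4 either yields $\max\le \tfrac{11}{12}M_k$ directly or forces the next step out of Case~4, so that three steps always suffice for the $\tfrac{23}{24}$ bound. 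You identified the obstacle correctly; what is missing is this case-by-case resolution.
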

\begin{proof}
The basic idea of the proof is from the proof of Lemma 6.4 of \cite{CN06}.
\begin{itemize}
\item
Case 1: $d_x(D_k^{\delta})\geq d_y(D_k^{\delta})$ and $d_x(D_k)\neq d_y(D_k)$. We know $z_k^{\delta}$ satisfies $|\text{Re}(z_k^{\delta}-\gamma_{\delta}(\sigma_{k-1}^{\delta}))|\geq d_x(D_k^{\delta})/2 $. Consider the rectangle $R$ (see figure \ref{fig6}) whose vertical sides are parallel to the $y$-axis, have length $d_x(D_k^{\delta})$, and are each placed between the $x$-coordinates of $z_k^{\delta}$ and $\gamma_{\delta}(\sigma_{k-1}^{\delta})$ such that the horizontal sides of $R$ have length $d_x(D_k^{\delta})/3$; the bottom and top sides of $R$ are placed in such a way that they are equal $y$-distance from the points of $\partial D_k^{\delta}$ with minimal or maximal $y$-coordinate, respectively. Denote $l$ by the line passes through the midpoints of the bottom and top sides of $R$. Then $b^{\delta}$ is located either to the right of  $l$ or to the left of $l$. Without loss of generality, we assume $b^{\delta}$ is located to the right of $l$. Divide the left half of $R$ into 2 congruent rectangle with width $d_x(D_k^{\delta})/12$ and height $d_x(D_k^{\delta})$, and label them by $B$ and $C$. Note that $\gamma_{\delta}[\sigma_{k-1}^{\delta},\tau_k^{\delta}]$ has the distribution of the chordal exploration process from $\gamma_{\delta}(\sigma_{k-1}^{\delta})$ to $z_k^{\delta}$ in $D_k^{\delta}$ up to the stopping time that $b^{\delta}$ is disconnect from $z_k^{\delta}$. It follows from the Russo-Seymour-Welsh lemma \cite{Rus78,SW78} that the probability to have vertical crossing of $C$ of different colors is bounded away from zero by a positive constant $p_0$ that does not depend on $\delta$ (for $\delta$ small enough). Recall that $D_{k+1}^{\delta}$ is a connected component of $\overline{D_k^{\delta}\setminus \Gamma(\gamma_{\delta}[\sigma_{k-1}^{\delta},\tau_k^{\delta}])}$.  After $\gamma_{\delta}$ completes the interface in $C$ (say at time $t_0$), there is no polygonal path from $z_k^{\delta}$ to $b^{\delta}$ in $\overline{D_k^{\delta}\setminus \Gamma(\gamma_{\delta}[\sigma_{k-1}^{\delta},t_0)}$, and thus $\tau_k^{\delta}$ happens before the completion of the interface in $C$, which implies $B$ is not contained in $D_{k+1}^{\delta}$, so $d_x(D_{k+1}^{\delta})\leq 11d_x(D_k^{\delta})/12 $ with probability at least $p_0$ independent of $\delta$.

\begin{figure}
\centering
\includegraphics[height=2.8in,width=3.0in]{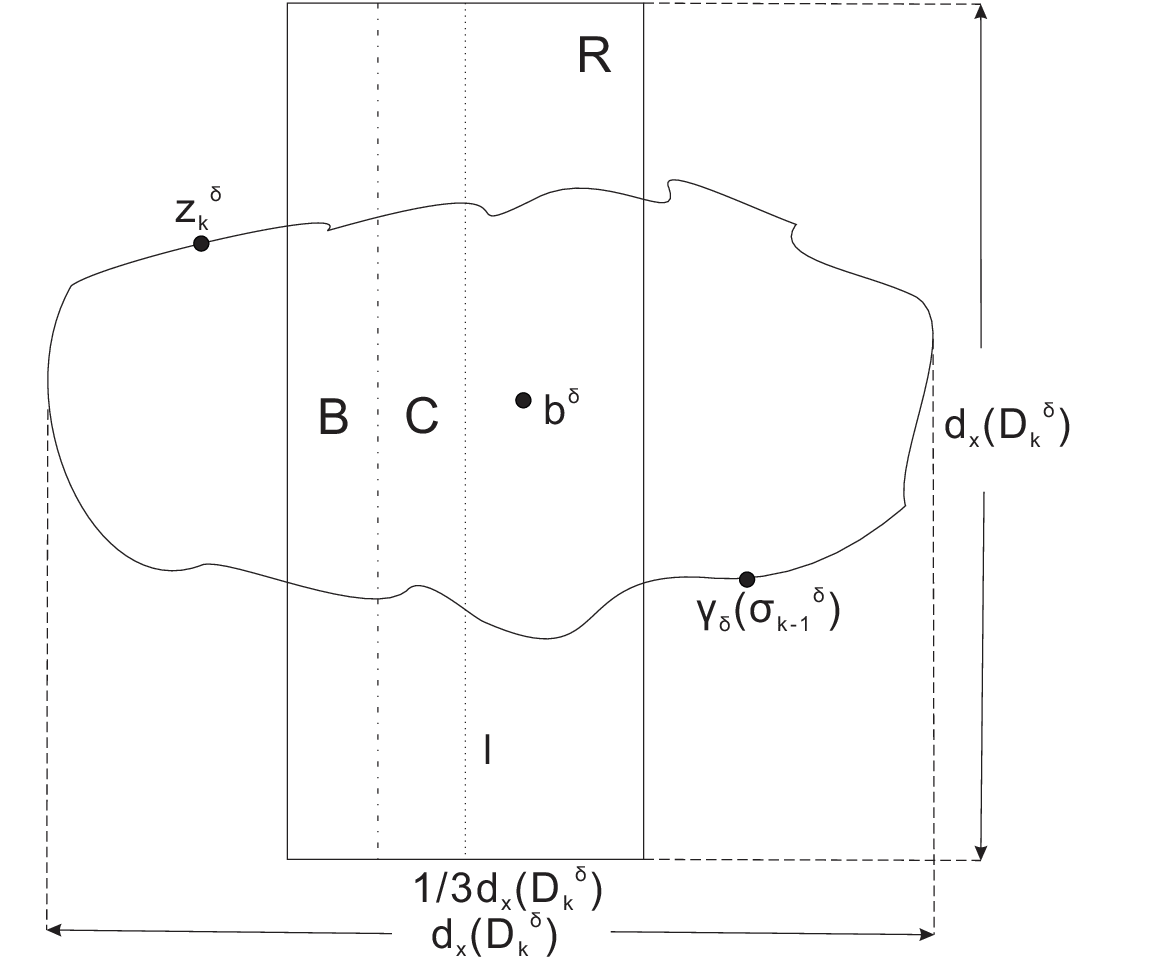}
\caption{The rectangle $R$ }\label{fig6}
\end{figure}
\item
Case 2: $d_x(D_k^{\delta})< d_y(D_k^{\delta})$ and $d_x(D_k)\neq d_y(D_k)$. Similar argument as case 1 implies $d_y(D_{k+1}^{\delta})\leq 11d_y(D_k^{\delta})/12$ with probability at least $p_0$ independent of $\delta$.
\item
Case 3: $d_x(D_k^{\delta})\geq d_y(D_k^{\delta})$ and $d_x(D_k)= d_y(D_k)$. The conclusion of case 1 is also valid here.
\item
Case 4: $d_x(D_k^{\delta})< d_y(D_k^{\delta})$ and $d_x(D_k)= d_y(D_k)$. For fixed $k$, by the first part of the theorem and Theorem 6.7 of \cite{Bil99}, there are coupled versions of $\gamma_{\delta}[0,\tau_k^{\delta}]$ and $\gamma[0,\tau_k]$ on the same probability space such that $d(\gamma_{\delta}[0,\tau_k^{\delta}], \gamma[0,\tau_k])\rightarrow 0$ a.s. as $\delta\downarrow 0$. Under this coupling, we have $|d_x(D_k^{\delta})-d_y(D_k^{\delta})|<\epsilon/2$ when $\delta$ is small enough. Since we assumed $\max\{d_x(D_k^{\delta}),d_y(D_k^{\delta})\}\geq \epsilon$, we get $d_x(D_k^{\delta})>\epsilon/2$. Similar rectangle as in case 1 with horizontal length $d_x(D_k^{\delta})/3$ and vertical length $d_y(D_k^{\delta})$ (note that $d_y(D_k^{\delta})<d_x(D_k^{\delta})+\epsilon/2<2d_x(D_k^{\delta})$) implies that $d_x(D_{k+1}^{\delta})\leq 11d_x(D_k^{\delta})/12$ with probability at least $p_0$ independent of $\delta$ (one may need to change $p_0$ to a new positive number here).
\end{itemize}

If both step $k$ and step $k+1$ are in cases 1, 2 and 3 then the lemma follows since cases 1, 2 and 3 reduce the maximum of $x$- and $y-$ distance by at least a factor of $1/12$.

If step $k$ is in case 4, step $k+1$ is in case 1, then we have
\begin{eqnarray}\label{eq4.2}
&&\max\{d_x(D_{k+2}^{\delta}),d_y(D_{k+2}^{\delta})\}\leq\max\{11d_x(D_{k+1}^{\delta})/12,d_y(D_{k+1}^{\delta})\}\nonumber\\
&&\leq\max\{11d_x(D_{k+1}^{\delta})/12,d_x(D_{k+1}^{\delta})\}\leq\max\{11d_x(D_{k}^{\delta})/12,11d_x(D_{k}^{\delta})/12\}\nonumber\\
&&\leq 11/12\max\{d_x(D_{k}^{\delta}),d_y(D_{k}^{\delta})\},
\end{eqnarray}
so the lemma follows.

If step $k$ is in case 4, step $k+1$ is in case 3, then the lemma follows by a similar argument as \eqref{eq4.2}. If step $k$ is in case 4, step $k+1$ is in case 2, then the proof of the lemma is trivial. If step $k$ is in case 2, step $k+1$ is in case 4, then the proof of the lemma is also trivial.

If step $k$ and step $k+1$ are in case 4: if $d_y(D_{k+1}^{\delta})\leq 23d_y(D_{k}^{\delta})/24$ then we are done, otherwise we have
$$d_y(D_{k+1}^{\delta})-d_x(D_{k+1}^{\delta})>23d_y(D_{k}^{\delta})/24-11d_x(D_{k}^{\delta})/12>d_y(D_{k}^{\delta})/24\geq\epsilon/24,$$
which contradicts the fact $d_y(D_{k+1})=d_x(D_{k+1})$ when $\delta$ is small.

So the only two bad situations that we can not achieve \eqref{eq4.1} in two steps are step $k$ and step $k+1$ are in case 1 and case 4 respectively, and in case 3 and case 4 respectively. But if we look into step $k+2$ then the lemma follows since we already proved any two successive steps starts with case 4 will reduce the maximum of $x$- and $y$- distances by a factor of $11/12$.
\end{proof}
Let $\tilde{K}_{\delta}^{\epsilon}:=\inf\{k\geq 1: \max\{d_x(D_{k}^{\delta}),d_y(D_{k}^{\delta})\}<\epsilon/\sqrt{2}\}$. Then $K_{\delta}^{\epsilon}\leq \tilde{K}_{\delta}^{\epsilon}$. Note that $\tilde{K}_{\delta}^{\epsilon}$ only depends on $\{k:\max\{d_x(D_{k}^{\delta}),d_y(D_{k}^{\delta})\}\geq\epsilon/\sqrt{2}\}$. Let $H(\epsilon)$ be the smallest integer $h\geq 1$ such that $2(23/24)^{h+1}<\epsilon/\sqrt{2}$ where the $2$ on the left hand side is the diameter of $\mathbb{D}$, i.e., $2(23/24)^{H(\epsilon)+1}<\epsilon/\sqrt{2}$ and $2(23/24)^{H(\epsilon)}\geq\epsilon/\sqrt{2}$. we call the first 3 steps of our discrete construction the 1st trial, and the steps $3k-2,3k-1,3k$ of our discrete construction the $k$-th trial ($k\geq 1$). We say the $k$-th trial is \textit{successful} if the event $E_k$ defined by \eqref{eq4.1} occurs. Lemma \ref{lem4.2} implies $P(E_1^c)\leq 1-p_0$ and $P(E_1^c|E_2^c)\leq 1-p_0$, so $P(E_1^cE_2^2)\leq (1-p_0)^2$. A simple induction argument gives $P(E_{i_1}^cE_{i_2}^c\cdots E_{i_j}^c)\leq (1-p_0)^j$ for any $j\in\mathbb{N}$ and $1\leq i_1<i_2<\cdots<i_j\leq 3n$ where $n$ is a fixed integer. Therefore,
\begin{eqnarray*}
P(\tilde{K}_{\delta}^{\epsilon}\geq 3n) &\leq& P\left(\text{number of successes in the } n \text{ trials }\leq H(\epsilon)\right)\\
&=&\sum_{k=0}^{H(\epsilon)}P\left(\text{number of successes in the } n \text{ trials }=k\right)\\
&\leq&\sum_{k=0}^{H(\epsilon)}\binom{n}{k}(1-p_0)^{n-k}\leq (H(\epsilon)+1)n^{H(\epsilon)}(1-p_0)^{n-H(\epsilon)}
\end{eqnarray*}
where the last term approaches $0$ as $n\rightarrow\infty$ since $H(\epsilon)$ is fixed when $\epsilon$ is fixed. Therefore
$$\lim_{C\rightarrow\infty}\limsup_{\delta\downarrow 0}P(\tilde{K}_{\delta}^{\epsilon}>C)=0.$$
This finishes the proof of the theorem since $K_{\delta}^{\epsilon}\leq \tilde{K}_{\delta}^{\epsilon}$.
\end{proof}
\begin{proof}[Proof of the Theorem \ref{thm1}]
Theorem \ref{thm1} is a immediate consequence of Theorem \ref{thm4.1}.
\end{proof}

\section{Convergence of full-plane exploration process}\label{sec5}
In this section, we will prove Theorem \ref{thm2}. Recall the definition of full-plane exploration process in $\mathcal{H}_{\delta}$ from $0_{\delta}$ to $\infty$ in the introduction. We will need Corollary \ref{cor}, so we prove it here
\begin{proof}[Proof of Corollary \ref{cor}]
Since radial SLE$_6$ in any Jordan domain is defined by the conformal image of the radial SLE$_6$ in $\mathbb{D}$, it suffices to prove Corollary \ref{cor} for $D=\mathbb{D}$. Let $\gamma$ be the radial SLE$_6$ trace from $1$ to $0$ in $\mathbb{D}$. Let $T:=\sup\{t\geq0:\gamma(t)\in\partial \mathbb{D}\}$. Our goal is to prove the distribution on $\partial \mathbb{D}$ induced by $\gamma(T)$ is the uniform distribution. Let $\mathbb{D}_{\delta}$ be the largest connected component of hexagons of $\mathbb{D}\cap\mathcal{H}_{\delta}$. Let $1_{\delta}$ be a closest mid-edge to $1$ in the set of mid-edges outside of $\mathbb{D}_{\delta}$ but within $\delta/2$ distance from $\partial \mathbb{D}_{\delta}$, and let $0_{\delta}$ be a closest mid-edge to $0$ in $\mathbb{D}_{\delta}$. Let $\gamma_{\delta}$ be a radial exploration process from $1_{\delta}$ to $0_{\delta}$ in $\mathbb{D}_{\delta}$, and $\gamma_{\delta}^{\prime}$ be a radial exploration process from $0_{\delta}$ to $1_{\delta}$ in $\mathbb{D}_{\delta}$. Let $T_{\delta}:=\sup\{t\geq 0:\gamma_{\delta}(t)\in \partial \mathbb{D}_{\delta}\}$, and $T_{\delta}^{\prime}:=\inf\{t\geq 0:\gamma_{\delta}^{\prime}(t)\in \partial \mathbb{D}_{\delta}\}$. Then Lemma \ref{lem3.1} implies $\gamma_{\delta}$ and the time-reversal of $\gamma_{\delta}^{\prime}$ have the same distribution, and thus $\gamma_{\delta}(T_{\delta})$ and $\gamma_{\delta}^{\prime}(T_{\delta}^{\prime})$ have the same distribution. By Theorem \ref{thm1} of this paper and Theorem 6.7 of \cite{Bil99}, we can find coupled versions of $\gamma_{\delta}$ and $\gamma$ such that $d(\gamma_{\delta},\gamma)\rightarrow 0$ a.s. as $\delta\downarrow 0$. Under this coupling, we claim:
\begin{equation*}
\lim_{\delta\downarrow 0}\gamma_{\delta}(T_{\delta})=\gamma(T)\text{ a.s.}
\end{equation*}
The proof of the claim is similar to the proof of Lemma \ref{lem4.1}: it is clear that $\limsup_{\delta\downarrow 0}\gamma_{\delta}(T_{\delta})\in \gamma[0,T]$ where the $\limsup$ is defined by the linear ordering $\gamma(t_1)\leq\gamma(t_2)$ for any $t_1\leq t_2$; suppose $\liminf_{\delta\downarrow 0}\gamma_{\delta}(T_{\delta})\in \gamma[0,T)$, then along some subsequence of $\delta$ the 3-arm (not all of the same color) event occurs on $\partial\mathbb{D}$, which contradicts Lemma 6.1 of \cite{CN06}. Since $\gamma_{\delta}(T_{\delta})$ and $\gamma_{\delta}^{\prime}(T_{\delta}^{\prime})$ have the same distribution, we conclude that $\gamma_{\delta}^{\prime}(T_{\delta}^{\prime})$ converges in distribution to $\gamma(T)$ as $\delta\downarrow 0$. Note that the distribution of $\gamma_{\delta}^{\prime}(T_{\delta}^{\prime})$ does not change if we only change the endpoint of $\gamma_{\delta}^{\prime}$ to any point on the unit circle, which implies the distribution on $\partial\mathbb{D}$ induced by $\gamma(T)$ is the same as the distribution induced by $e^{i\theta}\gamma(T)$ for any $0\leq \theta<2\pi$. Therefore, the distribution induced by $\gamma(T)$ is uniform, and the corollary follows.
\end{proof}

Next, we generalize Theorem \ref{thm1} to unbounded Jordan domains.
\begin{proposition}\label{prop5.1}
Let $D$ be an unbounded Jordan domain in $\mathbb{C}$ that contains $\infty$ as an interior point (view as a subset of $\hat{\mathbb{C}}$) and  $a\in\partial D$. Let $\gamma_{\delta}$ be the radial exploration process in $D_{\delta}$ from $a_{\delta}$ to $\infty$ where $D_{\delta}$ and $a_{\delta}$ are defined as the bounded case. Let $\gamma$ be the radial SLE$_6$ in $D$ from $a$ to $\infty$. Then $\gamma_{\delta}$ converges weakly to $\gamma$ in the metric defined by \eqref{eq2.2}.
\end{proposition}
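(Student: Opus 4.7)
The plan is to adapt the strategy of Section \ref{sec4} to the unbounded setting via a conformal reduction. After a translation we may assume $0\notin\overline{D}$, so that $\phi(z):=1/z$ is a M\"obius transformation with $\phi(\infty)=0$ mapping $D$ to a bounded Jordan domain $\widetilde{D}:=\phi(D)$ containing $0$. By conformal invariance of radial SLE$_6$, $\widetilde{\gamma}:=\phi\circ\gamma$ has the law of radial SLE$_6$ in $\widetilde{D}$ from $\phi(a)$ to $0$. Since $\phi$ is a homeomorphism of $\hat{\mathbb{C}}$ that intertwines the Euclidean metric on $\overline{\widetilde{D}}$ with the spherical metric on $\overline{D}$ up to bi-Lipschitz equivalence, the target reduces to showing that $\widetilde{\gamma}_\delta:=\phi\circ\gamma_\delta$ converges weakly to $\widetilde{\gamma}$ in the uniform metric on $\overline{\widetilde{D}}$.

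Next I would mirror the continuous and discrete constructions of Section \ref{sec4} on the image side $\widetilde{D}$. The continuous side yields stopping times $\tau_k$ for $\widetilde{\gamma}$ and decreasing subdomains $\widetilde{D}_k\subseteq\widetilde{D}$ (each containing $0$) from the $x,y$-diameter rule applied in $\widetilde{D}$. The discrete stopping times $\tau_k^\delta,\sigma_k^\delta$ are defined intrinsically on $D_\delta$: the auxiliary points $z_k^\delta\in\partial D_k^\delta$ are chosen so that $\phi(z_k^\delta)$ satisfies the $x,y$-diameter rule in $\widetilde{D}_k^\delta:=\phi(D_k^\delta)$, and $\tau_k^\delta$ is the first time $\gamma_\delta$ disconnects $z_k^\delta$ from $\infty$ in $D_k^\delta$. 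The per-piece convergence is then the analog of Lemma \ref{lem4.1}: by locality of percolation, $\gamma_\delta[\sigma_{k-1}^\delta,\tau_k^\delta]$ is distributed as a chordal exploration process between two boundary points of a bounded Jordan subdomain of $D_\delta$, converges weakly to chordal SLE$_6$ by Theorem~5 of \cite{CN07}, and under $\phi$ agrees with $\widetilde{\gamma}[\tau_{k-1},\tau_k]$ by locality of SLE$_6$ and conformal invariance.

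The diameter-reduction step (analog of Lemma \ref{lem4.2}) is then carried out on the bounded side: topological rectangles in $\widetilde{D}_k^\delta$ pull back under $\phi^{-1}$ to rectangle-like subregions of $D_k^\delta$, whose crossing probabilities are controlled by the standard RSW bounds for critical site percolation on $\mathcal{H}_\delta$. This yields a uniform positive-probability reduction factor for $\operatorname{diam}(\widetilde{D}_k^\delta)$; iterating exactly as in Theorem \ref{thm4.1} gives tightness in $\delta$ of the number of steps needed to shrink $\operatorname{diam}(\widetilde{D}_k^\delta)$ below any prescribed $\epsilon>0$. Concatenating the pieces via the Skorokhod representation then yields weak convergence of $\widetilde{\gamma}_\delta[0,\tau_k^\delta]$ to $\widetilde{\gamma}[0,\tau_k]$ for every $k$.

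The main obstacle is the endgame as $\widetilde{D}_k^\delta$ shrinks toward $0=\phi(\infty)$: there $\phi^{-1}$ has unbounded conformal distortion, so $\phi^{-1}$-images of small rectangles in $\widetilde{D}_k^\delta$ become highly elongated regions far from the lattice scale, making the direct RSW transfer delicate. I would close this gap using Corollary \ref{cor} applied to the bounded Jordan domain $\widetilde{D}$, which is already available via Lemma \ref{lem1} and Theorem \ref{thm1}: it identifies the distribution of the last hitting of $\partial\widetilde{D}$ by $\widetilde{\gamma}$ as the harmonic measure from $0$, and a parallel argument using the reversibility of the exploration process yields the same harmonic-measure limit for $\widetilde{\gamma}_\delta$'s last hitting. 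Together with the per-piece convergence and diameter tightness, this pins down the subsequential weak limits of $\widetilde{\gamma}_\delta$ as $\widetilde{\gamma}$ and completes the proof.
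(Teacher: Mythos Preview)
Your route through $\phi(z)=1/z$ is not the paper's, and the detour creates a gap that your proposed fix does not close. The paper argues directly in the unbounded domain $D$: it first observes that Cardy's formula, and therefore Theorem~5 of \cite{CN07} (convergence of the chordal exploration to chordal SLE$_6$), remains valid for unbounded Jordan domains, and then reruns the proof of Theorem~\ref{thm1} in $D$ itself. The per-piece step (Lemma~\ref{lem4.1}) uses chordal convergence in the possibly unbounded subdomains $D_k^\delta$, and the diameter-reduction step (Lemma~\ref{lem4.2}) uses RSW for Euclidean rectangles in $D$; since RSW bounds depend only on the aspect ratio and not on the scale, they apply uniformly as the curve moves off toward $\infty$. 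No conformal transfer of crossing estimates is ever needed.

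Your endgame patch via Corollary~\ref{cor} does not address the difficulty you identified. The corollary controls the law of the \emph{last} visit of $\widetilde{\gamma}$ (or $\widetilde{\gamma}_\delta$) to $\partial\widetilde{D}=\phi(\partial D)$; but the problematic region is a neighborhood of $0=\phi(\infty)$, which the curve enters \emph{after} it has left $\partial\widetilde{D}$ for the last time. Knowing the distribution of one boundary point gives no control over the tail of the curve near $0$, so it cannot identify the subsequential limits there. Moreover, your per-piece claim that $\gamma_\delta[\sigma_{k-1}^\delta,\tau_k^\delta]$ is a chordal exploration in a \emph{bounded} Jordan subdomain of $D_\delta$ is false: every $D_k^\delta$ contains a neighborhood of $\infty$. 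Hence you already need chordal convergence in unbounded Jordan domains to run even the first step---and once you have that ingredient, the paper's direct argument goes through without any conformal reduction, making the $\phi$-transfer and its attendant distortion problem unnecessary.
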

\begin{proof}
First of all, Cardy's formula (see \cite{Car92},\cite{Smi01} and also \cite{Bef07} for a easy proof) is valid for unbounded Jordan domains. So Theorem 5 of \cite{CN07} is also true for unbounded Jordan domains, i.e., the chordal exploration process in an unbounded Jordan domain converges to chordal SLE$_6$ in the same domain. The rest proof is the same as the proof of Theorem \ref{thm1}.
\end{proof}
\begin{remark}
A similar proof as the proof of Lemma 5.3 of \cite{CN06} gives:
Let $(D,a)$ be a random unbounded Jordan domain, with $a\in\partial D$. Let $\{(D_k,a_k)\}_{k\in\mathbb{N}}$, be a sequence of random Jordan domains with $a_k\in \partial D_k$ such that, as $k\rightarrow\infty$, $(\partial D_k, a_k)$ converges in distribution to $(\partial D,a)$ with respect the metric \eqref{eq2.1} on continuous curve, and the Euclidean metric on $a$. Let $\gamma_{\delta}^k$ be the radial exploration process in $(D_k)_{\delta}$ from $(a_k)_{\delta}$ to $\infty$. For any sequence $\{\delta_k\}_{k\in\mathbb{N}}$ with $\delta_k\downarrow 0$ as $k\rightarrow\infty$, then $\gamma_{\delta_k}^k$ converges weakly to the radial SLE$_6$ in $D$ from $a$ to $\infty$ with respect to metric \eqref{eq2.2}.
\end{remark}

We will need some properties about the full-plane SLE$_{\kappa}$.
\begin{lemma}\label{lem5.1}
Let $\gamma$ be the trace of the full-plane SLE$_{\kappa}$ in $\mathbb{C}$ from $0$ to $\infty$. For any fixed $s\in \mathbb{R}$, conditioned on the $\gamma[-\infty,s]$ , $\gamma[s,\infty]$ has the distribution of a radial SLE$_{\kappa}$ trace started at $\gamma(s)$ and growing in the connected component of $\hat{\mathbb{C}}\setminus\gamma[-\infty,s]$ that contains $\infty$. Moreover, suppose $\gamma^s$ is the radial SLE$_{\kappa}$ in $\mathbb{C}\setminus e^s\mathbb{D}$ started uniformly on $e^s\partial\mathbb{D}$. Then as $s\rightarrow -\infty$, $\gamma^s$ converges weakly to $\gamma$ in the metric \eqref{eq2.2}.
\end{lemma}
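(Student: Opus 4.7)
The plan is to derive the Markov statement directly from the Loewner ODE, and for the convergence statement to couple $\gamma^s$ with $\gamma|_{[s,\infty)}$ via a natural conformal map whose distortion from the identity vanishes as $s\to-\infty$.

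For the first assertion, fix $s\in\mathbb R$ and set $\tilde g_t := g_{s+t}\circ g_s^{-1}$ for $t\geq 0$. Differentiating using \eqref{eq2.3} gives
\[
\partial_t\tilde g_t(w)=-\tilde g_t(w)\frac{\tilde g_t(w)+e^{-iU_{s+t}}}{\tilde g_t(w)-e^{-iU_{s+t}}},\qquad \tilde g_0(w)=w,
\]
on $w\in\mathbb C\setminus\overline{\mathbb D}$. Conjugating by the inversion $\iota(w)=1/w$, which sends $\mathbb C\setminus\overline{\mathbb D}$ onto $\mathbb D\setminus\{0\}$ and $\infty$ to $0$, turns this into the standard radial Loewner equation in $\mathbb D$ with driving function $e^{iU_{s+t}}$ and initial point $e^{iU_s}$. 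Writing $U_{s+t}=U_s+\sqrt\kappa(B_{s+t}-B_s)$, the Markov property of $B$ implies that $t\mapsto B_{s+t}-B_s$ is a standard Brownian motion independent of $\mathcal F_s$. Hence, conditionally on $\mathcal F_s$, the image of $\gamma[s,\infty]$ under $\iota\circ g_s$ has the law of a radial SLE$_\kappa$ trace in $\mathbb D$ from $e^{iU_s}$ to $0$. Pulling back through $g_s^{-1}\circ\iota^{-1}$, which is the canonical conformal map from $\mathbb D$ onto the unbounded component of $\hat{\mathbb C}\setminus\gamma[-\infty,s]$ sending $0$ to $\infty$ and $e^{iU_s}$ to $\gamma(s)$, yields the claim.

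For the convergence assertion, rotational invariance of the full-plane SLE (built into the construction through the uniform phase $Y$) shows $e^{-iU_s}$ is uniformly distributed on $\partial\mathbb D$. Let $\phi_s$ be the unique conformal map from $\mathbb C\setminus e^s\overline{\mathbb D}$ onto the unbounded component of $\hat{\mathbb C}\setminus\gamma[-\infty,s]$ that fixes $\infty$ with positive real derivative and sends the uniformly chosen starting point of $\gamma^s$ to $\gamma(s)$; since both domains have conformal radius $e^s$ at $\infty$, we have $\phi_s'(\infty)=1$. By conformal invariance of radial SLE$_\kappa$ together with the first assertion, $\phi_s\circ\gamma^s$ has the same law as $\gamma|_{[s,\infty)}$. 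Rescaling shows $w\mapsto e^{-s}\phi_s(e^s w)$ is a univalent map on $\mathbb C\setminus\overline{\mathbb D}$ of the form $w+\sum_{n\geq 0}b_n^s/w^n$ with coefficients bounded via the area theorem, so $\phi_s(z)=z+\sum_{n\geq 0}a_n^s/z^n$ with $|a_n^s|=O(e^{(n+1)s})$. Combined with $\mathrm{diam}(\gamma[-\infty,s])=O(e^s)$, this yields $\sup_{z\in\mathbb C\setminus e^s\overline{\mathbb D}}\Delta(\phi_s(z),z)\to 0$ in the spherical metric as $s\to-\infty$: for $|z|\geq 2e^s$ the Laurent bound gives $|\phi_s(z)-z|=O(e^s)$; for $|z|\in(e^s,2e^s]$, both $z$ and $\phi_s(z)$ have modulus $O(e^s)$ and so are spherically close to $0$ and to each other.

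Consequently $D(\phi_s\circ\gamma^s,\gamma^s)\to 0$ almost surely under the coupling. Because $\gamma$ extends continuously to $[-\infty,\infty]$ with $\gamma(-\infty)=0$, after a natural reparametrization $\gamma|_{[s,\infty)}\to\gamma$ in $(\mathcal K,D)$ as well; the triangle inequality then gives $D(\gamma^s,\gamma)\to 0$ almost surely, hence the desired weak convergence. The main technical hurdle is the uniform spherical control on $\phi_s-\mathrm{id}$ in the regime $|z|\sim e^s$, where the hull $\gamma[-\infty,s]$ is typically not a round disk; the spherical-metric observation above is what bypasses the need for finer Euclidean estimates there.
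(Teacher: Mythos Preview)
Your argument is correct and follows essentially the same route as the paper: both derive the Markov property from the Loewner ODE for $g_{s+t}\circ g_s^{-1}$, and both prove the convergence by coupling $\gamma^s$ with $\gamma|_{[s,\infty)}$ through the conformal map $\phi_s=g_s^{-1}\circ(z\mapsto e^{-s}z)$ and showing $\phi_s$ is uniformly $O(e^s)$-close to the identity. The only real difference is in how that distortion bound is obtained---the paper invokes Proposition~3.30 of \cite{Law05a} directly, whereas you extract it from the area theorem plus a case split near $|z|\sim e^s$; note in passing that the area theorem does not control the constant term $b_0^s$ (that bound comes instead from $\mathrm{diam}(\gamma[-\infty,s])=O(e^s)$, which you already use), and that your specification of $\phi_s$ is slightly over-determined as written, since the normalization at $\infty$ already pins down the map and the starting point of $\gamma^s$ should be \emph{defined} as $\phi_s^{-1}(\gamma(s))$ in the coupling.
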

\begin{proof}
Let $g_t$ be the Loewner maps, i.e., $g_t$ satisfying \eqref{eq2.3} and the initial condition following it. We follow the idea in section 2.4 of \cite{Law04}. For any $s\leq t$, we define $h_{s,t}(z):=g_t(g_s^{-1}(z))$ for any $z$ in the unbounded component of $\mathbb{C}\setminus\{\overline{\mathbb{D}}\cup g_s(\gamma[s,t])\}$. Then we have
$$\partial_t h_{s,t}(z)=-h_{s,t}(z)\frac{h_{s,t}(z)+e^{-iU_t}}{h_{s,t}(z)-e^{-iU_t}},~~~h_{s,s}(z)=z.$$
So for any fixed $s$, $\{h_{s,t}^{-1}(e^{-iU_t})\}_{t\geq s}$ is the radial SLE$_6$ trace in the connected component of $\hat{\mathbb{C}}\setminus\overline{\mathbb{D}}$ that contains $\infty$. But $h_{s,t}^{-1}(e^{-iU_t})=g_s(g_t^{-1}(e^{-iU_t}))=g_s(\gamma(t))$, so the first part of the lemma follows.

For the second part of the lemma, let $\tilde{g}_s(z):=e^{-s}z$ and $g_{s,t}(z)=h_{s,t}(\tilde{g}_s(z))$. Then Proposition 3.30 of \cite{Law05a} implies
$$|g_t^{-1}(w)-g_{s,t}^{-1}(w)|=|g_s^{-1}(h_{s,t}^{-1}(w))-\tilde{g}_s^{-1}(h_{s,t}^{-1}(w))|\leq C e^s \text{ for any } |w|>1,$$
where $C>0$ is independent of $s, t$ and $w$. Let $w\rightarrow e^{-iU_t}$, we get
$$|\gamma(t)-\tilde{g}_s^{-1}(g_s(\gamma(t)))|\leq Ce^s.$$
Note $\tilde{g}_s^{-1}(g_s(\gamma(t)))$ has the same distribution as $\gamma^s$. Since $g_s(z)\sim e^{-s}z$ as $z\rightarrow\infty$, the Koebe $1/4$ Theorem (see Corollary 3.19 of \cite{Law05a}) implies $\gamma[-\infty,s]\subseteq\{z:|z|\leq 4 e^s\}$. Therefore the second part of the lemma follows.
\end{proof}
\begin{remark}
By the strong Markov property of Brownian motion, the first part of the lemma holds if $s$ is replaced by some stopping time of $\gamma$.
\end{remark}
Let $\gamma(t), -\infty<t<\infty$ be the trace of the full-plane SLE$_6$ in $\mathbb{C}$ from $0$ to $\infty$. Let $K_t$ be the hull generated by $\gamma[-\infty,t]$, i.e., the complement of the unbounded component of $\mathbb{C}\setminus\gamma[-\infty,t]$. Let $W_t, t\geq 0$ be a complex Brownian motion starting at the origin, and let $\hat{K_t}$ be the hull generated by $W[0,t]$. For any simply connected domain $D$ containing $0$, let $\sigma_D:=\inf\{t\geq -\infty:\gamma(t)\in\partial D\}$ and $\tau_D:=\inf\{t\geq 0: W_t\in\partial D\}$. Then Proposition 6.32 of \cite{Law05a} says that $K_{\sigma_{\mathbb{D}}}$ and $\hat{K}_{\sigma_{\mathbb{D}}}$ have the same distribution.

For any $\epsilon>0$, let $\gamma_{\delta}^{\epsilon}(t), 0\leq t\leq\infty$ be the radial exploration process from $\epsilon_{\delta}$ to $0_{\delta}$ in $(\epsilon\mathbb{D})_{\delta}$. Let $T_{\delta}^{\epsilon}:=\sup\{t\geq 0: \gamma_{\delta}^{\epsilon}(t)\in\partial (\epsilon\mathbb{D})_{\delta}\}$. Let $\gamma^{\epsilon}$ be the radial SLE$_6$ in $\epsilon\mathbb{D}$ from $\epsilon$ to $0$ and $T^{\epsilon}:=\sup\{t\geq0:\gamma^{\epsilon}(t)\in\partial(\epsilon\mathbb{D})\}$. Then the proof of Corollary \ref{cor} implies $\gamma_{\delta}^{\epsilon}[T_{\delta}^{\epsilon},\infty]$ converges weakly to $\gamma^{\epsilon}[T^{\epsilon},\infty]$. A little more work using Lemma 6.1 of \cite{CN06} and Lemmas 7.1 \& 7.2 of \cite{CN07} gives the boundary of the hull generated by $\gamma_{\delta}^{\epsilon}[T_{\delta}^{\epsilon},\infty]$, i.e., the complement of the unbounded component of $\overline{\mathcal{H}_{\delta}\setminus \Gamma(\gamma_{\delta}^{\epsilon}[T_{\delta}^{\epsilon},\infty])}$, converges weakly to the boundary of the hull generated by $\gamma^{\epsilon}[T^{\epsilon},\infty]$. Lemma \ref{lem3.1}, Corollary \ref{cor} and the proof of Proposition 6.32 of \cite{Law05a} imply the hull generated by $\gamma^{\epsilon}[T^{\epsilon},\infty]$ has the distribution of $\hat{K}_{\sigma_{\epsilon\mathbb{D}}}$. Applying Lemma \ref{lem3.1} again, we have
\begin{lemma}\label{lem5.2}
For any $\epsilon>0$, let $\beta_{\delta}^{\epsilon}(t), 0\leq t\leq\infty$ be the radial exploration process from $0_{\delta}$ to $\epsilon_{\delta}$ in $(\epsilon\mathbb{D})_{\delta}$. Let $S_{\delta}^{\epsilon}:=\inf\{t\geq 0: \beta_{\delta}^{\epsilon}(t)\in\partial (\epsilon\mathbb{D})_{\delta}\}$. Let $\beta^{\epsilon}$ be the time-reversal of $\gamma^{\epsilon}[T^{\epsilon},\infty]$. Then we have $\beta_{\delta}^{\epsilon}[0,S_{\delta}^{\epsilon}]$ converges weakly to $\beta^{\epsilon}$, and the hull generated by $\beta_{\delta}^{\epsilon}[0,S_{\delta}^{\epsilon}]$ converges weakly to $\hat{K}_{\sigma_{\epsilon\mathbb{D}}}$ as $\delta\downarrow 0$.
\end{lemma}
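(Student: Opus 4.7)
The plan is to deduce Lemma \ref{lem5.2} from the reversibility statement (Lemma \ref{lem1}) combined with the convergence result established in the paragraph immediately preceding the statement. By Lemma \ref{lem1}, the radial exploration process $\beta_{\delta}^{\epsilon}$ from $0_{\delta}$ to $\epsilon_{\delta}$ in $(\epsilon\mathbb{D})_{\delta}$ has the same law as the time-reversal of $\gamma_{\delta}^{\epsilon}$. Writing $N_{\delta}^{\epsilon}$ for the length of $\gamma_{\delta}^{\epsilon}$, this identification in distribution sends $\beta_{\delta}^{\epsilon}(s)$ to $\gamma_{\delta}^{\epsilon}(N_{\delta}^{\epsilon}-s)$, under which the first hitting time $S_{\delta}^{\epsilon}$ of $\beta_{\delta}^{\epsilon}$ with $\partial(\epsilon\mathbb{D})_{\delta}$ corresponds precisely to $N_{\delta}^{\epsilon}-T_{\delta}^{\epsilon}$. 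Consequently, in distribution, $\beta_{\delta}^{\epsilon}[0,S_{\delta}^{\epsilon}]$ equals the time-reversal of $\gamma_{\delta}^{\epsilon}[T_{\delta}^{\epsilon},\infty]$.

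Once this is set up, the first assertion of the lemma reduces to a continuity statement for time-reversal. The paragraph preceding the statement already establishes that $\gamma_{\delta}^{\epsilon}[T_{\delta}^{\epsilon},\infty]$ converges weakly to $\gamma^{\epsilon}[T^{\epsilon},\infty]$ with respect to the uniform metric \eqref{eq2.1}. Since time-reversal is a continuous involution on the space of continuous curves under the uniform metric (the map $f(\cdot)\mapsto f(1-\cdot)$ on parametrizations preserves the infimum defining $d(\cdot,\cdot)$), weak convergence is preserved by time-reversal. Hence the time-reversal of $\gamma_{\delta}^{\epsilon}[T_{\delta}^{\epsilon},\infty]$ converges weakly to the time-reversal of $\gamma^{\epsilon}[T^{\epsilon},\infty]$, which is by definition $\beta^{\epsilon}$. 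Combined with the reversibility identification above, $\beta_{\delta}^{\epsilon}[0,S_{\delta}^{\epsilon}]$ converges weakly to $\beta^{\epsilon}$.

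For the hull statement, the key observation is that the hull generated by a curve (the complement of the unbounded component of its complement) depends only on the image of the curve as a subset of $\mathbb{C}$, not on its parametrization or orientation. Under the reversibility coupling of the previous paragraph the two curves $\beta_{\delta}^{\epsilon}[0,S_{\delta}^{\epsilon}]$ and $\gamma_{\delta}^{\epsilon}[T_{\delta}^{\epsilon},\infty]$ have identical image, hence generate the same hull, so also the same boundary. The preceding paragraph states that the boundary of the latter hull converges weakly to $\hat{K}_{\sigma_{\epsilon\mathbb{D}}}$, and the second claim follows at once.

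The part I expect to require the most care is verifying the discrete-level identification $S_{\delta}^{\epsilon}=N_{\delta}^{\epsilon}-T_{\delta}^{\epsilon}$ under the coupling; one must check that the \emph{last} visit of $\gamma_{\delta}^{\epsilon}$ to $\partial(\epsilon\mathbb{D})_{\delta}$ is mapped by time-reversal to the \emph{first} visit of the reversed walk, which is immediate once one recalls that $\gamma_{\delta}^{\epsilon}$ is parametrized so that half-integer times are vertices of $\mathcal{H}_{\delta}$, so the reversed polygonal path has a well-defined first hit. No further measure-theoretic work is needed, since passing from weak convergence of curves and of boundary hulls to the joint statement we want is immediate from Skorokhod representation, exactly as used in the preceding arguments.
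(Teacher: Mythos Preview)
Your proposal is correct and follows essentially the same approach as the paper: the paper's proof consists of the single phrase ``Applying Lemma~\ref{lem3.1} again,'' which is exactly the reversibility-plus-time-reversal-continuity argument you spell out. Your added care about the correspondence $S_{\delta}^{\epsilon}\leftrightarrow N_{\delta}^{\epsilon}-T_{\delta}^{\epsilon}$ and the parametrization-independence of the hull simply makes explicit what the paper leaves implicit.
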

Now we have all ingredients to prove Theorem \ref{thm2}.
\begin{proof}[Proof of Theorem \ref{thm2}]
Let $\gamma_{\delta}(t), 0\leq t<\infty$ be the full plane exploration process in $\mathcal{H}_{\delta}$ from $0_{\delta}$ to $\infty$. Let $\gamma(t), -\infty\leq t\leq \infty$ be the full-plane SLE$_6$ in $\mathbb{C}$ from $0$ to $\infty$. For any $\epsilon>0$, we define $\tau_{\delta}^{\epsilon}:=\inf\{t\geq 0:\gamma_{\delta}(t)\in \partial(\epsilon\mathbb{D})_{\delta}\}$. It is clear $\gamma_{\delta}[0,\tau_{\delta}^{\epsilon}]$ has the same distribution as $\beta_{\delta}^{\epsilon}[0,S_{\delta}^{\epsilon}]$ (see Lemma \ref{lem5.2}). Let $K_{\delta}^{\epsilon}$ be the hull generated by $\gamma_{\delta}[0,\tau_{\delta}^{\epsilon}]$, i.e., the complement of the unbounded component of $\overline{\mathcal{H}_{\delta}\setminus \Gamma(\gamma_{\delta}[0,\tau_{\delta}^{\epsilon}])}$. Then Lemma \ref{lem5.2} implies $K_{\delta}^{\epsilon}$ converges weakly to $\hat{K}_{\sigma_{\epsilon\mathbb{D}}}$. Note that $\gamma_{\delta}[\tau_{\delta}^{\epsilon},\infty)$ is a radial exploration process in the unbounded component of $\overline{\mathcal{H}_{\delta}\setminus \Gamma(\gamma_{\delta}[0,\tau_{\delta}^{\epsilon}])}$. So the remark after Proposition \ref{prop5.1} implies $\gamma_{\delta}[\tau_{\delta}^{\epsilon},\infty)$ converges weakly to a radial SLE$_6$ $\tilde{\gamma}^{\epsilon}$ in $\mathbb{C}\setminus \hat{K}_{\sigma_{\epsilon\mathbb{D}}}$ aiming at $\infty$. Clearly, we have
$$\rho(\gamma_{\delta}[0,\infty],\gamma[-\infty,\infty])\leq \rho(\gamma_{\delta}[0,\infty],\gamma_{\delta}[\tau_{\delta}^{\epsilon},\infty])+\rho(\gamma_{\delta}[\tau_{\delta}^{\epsilon},\infty],\tilde{\gamma}^{\epsilon})+
\rho(\tilde{\gamma}^{\epsilon},\gamma[-\infty,\infty]).$$
The first term on the left hand side of the above inequality is bounded by $C\epsilon$ where $C$ comes from the equivalence of Euclidean metric and spherical metric in bounded region and thus $C$ is independent of $\epsilon$ and $\delta$ if $\epsilon<1$; the third term is also bounded by $C\epsilon$ by the remark after Lemma \ref{lem5.1} and the discussion right after that remark; the second term can be made arbitrarily small if $\delta$ is small by the discussion before the inequality. This completes the proof of Theorem \ref{thm2}.
\end{proof}

\begin{proof}[Proof of Theorem \ref{thm3}]
This is an immediate consequence of Theorem \ref{thm2} and its proof.
\end{proof}

\section*{Acknowledgements}
The author would like to thank Tom Kennedy for introducing him to this area of research and for many stimulating and helpful discussions. The author also thanks the referees for many valuable suggestions and comments.

\end{document}